\documentclass[a4paper,11pt]{article}
\usepackage{amsfonts}
\usepackage{fancyhdr}
\usepackage[latin1]{inputenc}
\usepackage[T1]{fontenc}
\usepackage{rotating}
\usepackage[frenchb,english]{babel}
\usepackage{geometry}
\usepackage{graphicx}
\usepackage[colorlinks=true]{hyperref} 
\usepackage{float,caption}
\usepackage{wrapfig}
\usepackage{amsmath}
\usepackage{amsthm}
\usepackage{amssymb}
\usepackage{blkarray}
\usepackage{multirow}
\usepackage{multirow,bigdelim}
\def\VR{\kern-\arraycolsep\strut\vrule &\kern-\arraycolsep}
\def\vr{\kern-\arraycolsep & \kern-\arraycolsep}

\usepackage{tikz}
\usetikzlibrary{
  matrix,
  positioning,
  decorations,
  decorations.pathreplacing
}
\newtheorem{theorem}{Theorem}

\theoremstyle{plain}
\newtheorem*{theorem*}{Theorem}
\newtheorem{example}{Example}
\newtheorem{counter example}{Counter Example}
\newtheorem*{definition}{Definition}
\newtheorem*{remark}{Remark}
\newtheorem{proposition}{Proposition}[section]
\newtheorem{lemma}{Lemma}[section]

\newtheorem*{notation*}{Notation}

\begin{document}
\title{Moduli spaces of a family of topologically non quasi-homogeneous functions.}
\author{Jinan Loubani}
\date{September 27, 2018}

\newcommand{\Addresses}{{% additional braces for segregating \footnotesize
  \bigskip
  \footnotesize
%J.~Loubani,

  \textsc{Institut de Math\'{e}matiques de Toulouse, Universit\'{e} Paul Sabatier,
    118 route de Narbonne, 31062 Toulouse cedex 9, France.}\par\nopagebreak
  \textit{E-mail address}: \texttt{Jinan.Loubani@math.univ-toulouse.fr}

  \medskip

  \textsc{Dipartimento di Matematica "F. Casorati", Universit\`{a} degli Studi di Pavia, Via Ferrata, 5 - 27100 Pavia, Italy.}\par\nopagebreak
  \textit{E-mail address}: \texttt{jinan.loubani01@universitadipavia.it}

}}

\maketitle
\begin{center}
\textbf{Abstract}
\end{center}
We consider a topological class of a germ of complex analytic function in two variables which does not belong to its jacobian ideal. Such a function is not quasi homogeneous. Each element $f$ in this class induces a germ of foliation $(df = 0)$. Proceeding similarly to the homogeneous case \cite{1} and the quasi homogeneous case \cite{2} treated by Genzmer and Paul, we describe the local moduli space of the foliations in this class and give analytic normal forms. We prove also the uniqueness of these normal forms. 

\medskip

\noindent \textbf{keywords:} complex foliations, singularities.\\
\noindent \textbf{MSC class:} 34M35, 32S65.

\section*{Introduction}
A germ of holomorphic function $f: (\mathbb{C}^2, 0) \longrightarrow (\mathbb{C}, 0)$ is said to be  \textit{quasi-homogeneous} if and only if $f$ belongs to its jacobian ideal $J(f)=( \frac{\partial f}{\partial x},\frac{\partial f}{\partial y}) $. If $f$ is quasi-homogeneous, then there exist coordinates $(x,y)$ and positive coprime integers $k$ and $l$ such that the quasi-radial vector field $R=kx\frac{\partial}{\partial x}+ly\frac{\partial}{\partial y}$ satisfies $R(f)=d\cdot f$, where the integer $d$ is the \textit{quasi-homogeneous $(k,l)$-degree} of $f$ \cite{6}. In \cite{2}, Genzmer and Paul constructed analytic normal forms of \textit{topologically quasi-homogeneous functions}, the holomorphic functions topologically equivalent to a quasi-homogeneous function.

\medskip

\noindent In this article, we study the simplest topological class beyond the quasi-homogeneous singularities, and we consider the following family of functions
$$f_{M,N}=\prod_{i=1}^N \big(y+a_ix\big)\prod_{i=1}^M \big(y+b_ix^2\big).$$
These functions are not quasi homogeneous. 
The symmetry $R$ is a central tool to study the moduli space of quasi-homogeneous functions. In some sense, it allowed Genzmer and Paul to compactify the moduli space and to describe it globally from a local study. However, in our case, we lack the existence of such a symmetry and thus we have to introduce a new approach.

\medskip

\noindent We denote by $\mathcal{T}_{M,N}$ the set of holomorphic functions which are topologically equivalent to $f_{M,N}$. The purpose of this article is to describe the moduli space $\mathcal{M}_{M,N}$ which is the topological class $\mathcal{T}_{M,N}$ up to right-left equivalence.
We give the infinitesimal description and local parametrization of this moduli space using the cohomological tools considered by J.F. Mattei in [3]: the tangent space to the moduli space is given by the first Cech cohomology group $H^1(D,\Theta_{\mathcal{F}})$, where $D$ is the exceptional divisor of the desingularization of $f_{M,N}$, and $\Theta_{\mathcal{F}}$ is the sheaf of germs of vector fields tangent to the desingularized foliation of the foliation induced by $df_{M,N} = 0$. Using a particular covering of $D$, we give a presentation of the space $H^1(D,\Theta_{\mathcal{F}})$ and exhibit a universal family of analytic normal forms. This way, we obtain local description of $\mathcal{M}_{M,N}$. We finally prove the global uniqueness of these normal forms.

\section{The dimension of $H^1(D,\Theta_{\mathcal{F}})$.}
The foliations induced by the elements of $\mathcal{T}_{M,N}$ can be desingularized after two standard blow-ups of points. So, we consider the composition of two blow-ups\\
$E: (\mathcal{M},D) \longrightarrow (\mathbb{C}^2, 0)$ with its exceptional divisor $D=E^{-1}(0)$. On the manifold $\mathcal{M}$, we consider the three charts $V_2(x_2, y_2)$, $V_3(x_3, y_3)$ and $V_4(x_4,y_4)$ in which $E$ is defined by $E(x_2, y_2)=(x_2y_2, y_2)$, $E(x_3, y_3)=(x_3, x_3^2y_3)$ and $E(x_4,y_4)=(x_4y_4,x_4y_4^2)$.
\begin{figure}[h!]
\begin{center}
\includegraphics[width=12cm]{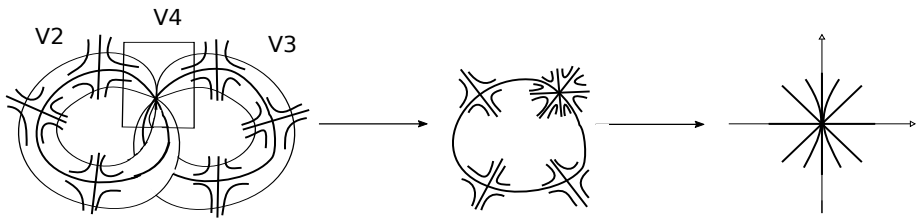}
\end{center}
\caption{Desingularization of $f_{M,N}$ for $M=N=3$}
\end{figure}

\noindent In particular, once $M\geq 2$ and $N\geq 2$, any function in $\mathcal{T}_{M,N}$ is not topologically quasi-homogeneous since the weighted desingularization process is a topological invariant \cite{7}.

\begin{notation*}
Let $\mathcal{Q}_{M,N}$ be the region in the union of the real half planes $(X,Y)$, $X\geq 0$ and $Y\geq 0$, delimited by 
$$Y-X+(M-1)>0$$  $$2Y-X-(N-1)<0$$
\end{notation*}

\begin{figure}[h!]
\begin{center}
\includegraphics[width=9cm]{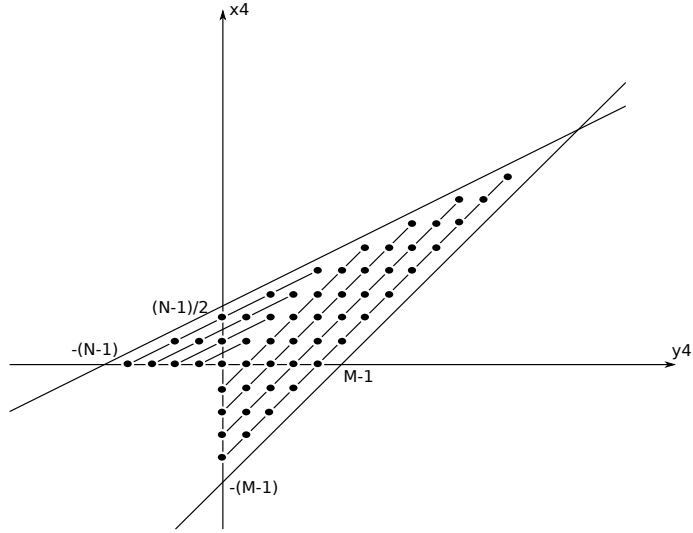}
\end{center}
\caption{The region $\mathcal{Q}_{M,N}$ for $M=N=6$}
\end{figure}

\begin{proposition}
The dimension $\delta$ of the first cohomology group $H^1(D,\Theta_{\mathcal{F}})$ is equal to the number of the integer points in the region $\mathcal{Q}_{M,N}$ which can be expressed by the following formula
$$\delta = \frac{(M+N-2)(M+N-3)}{2}+\frac{(M-1)(M-2)}{2}.$$
\end{proposition}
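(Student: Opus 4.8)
The plan is to compute $H^1(D,\Theta_{\mathcal F})$ by \v{C}ech cohomology with respect to a cover adapted to the two components of the exceptional divisor, to identify an explicit monomial basis of the cohomology with the integer points of $\mathcal Q_{M,N}$, and finally to carry out the lattice-point count. The arithmetic at the end splits into exactly the two summands appearing in the statement, so the geometric content is concentrated in the cohomological identification.

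First I would describe the desingularized foliation in the three charts $V_2,V_3,V_4$. After the two blow-ups $D=D_1\cup D_2$ is a chain of two projective lines meeting at a single corner: $D_1$ carries the $N$ reduced singularities coming from the linear factors $y+a_ix$ (located at $x_2=-1/a_i$ in $V_2$), while $D_2$ carries the $M$ reduced singularities coming from the quadratic factors $y+b_ix^2$ (located at $y_3=-b_i$ in $V_3$); in the corner chart $V_4$ one has $D_1=\{y_4=0\}$ and $D_2=\{x_4=0\}$. In each chart I would exhibit local generators of $\Theta_{\mathcal F}$: this sheaf is locally free of rank one off the singular points, and near each reduced singularity it is generated by the local linear vector field whose invariant directions are the divisor and the separatrix. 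This furnishes the transition data for the \v{C}ech computation.

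Next I would take an acyclic two-set cover $\mathcal U=\{A_1,A_2\}$, with $A_i$ a neighborhood of $D_i$, so that $A_1\cap A_2$ is a bidisc around the corner and, by Mayer--Vietoris,
$$H^1(D,\Theta_{\mathcal F})\;\cong\;\frac{\Theta_{\mathcal F}(A_1\cap A_2)}{\Theta_{\mathcal F}(A_1)+\Theta_{\mathcal F}(A_2)}.$$
Expanding a tangent vector field on $A_1\cap A_2$ in the corner coordinates $(x_4,y_4)$ and imposing tangency to $\mathcal F$ reduces it, modulo the fixed generator, to a Laurent combination of monomials $x_4^{\,i}y_4^{\,j}$; since the two subspaces $\Theta_{\mathcal F}(A_1)$ and $\Theta_{\mathcal F}(A_2)$ are spanned by monomials, the quotient has a monomial basis. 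The decisive point is to read off, via the coordinate changes $V_4\to V_2$ and $V_4\to V_3$, exactly when a monomial extends holomorphically over $A_1$ and over $A_2$: the pole orders along $D_1$ and $D_2$ give the half-plane part of the conditions, while the behaviour at the linear end of $D_1$ and at the quadratic end of $D_2$ gives the two strict inequalities $Y-X+(M-1)>0$ and $2Y-X-(N-1)<0$, the factor $2$ recording that $y$ scales like $x^2$ along $D_2$. With $(X,Y)$ the exponents after the normalization dictated by the separatrix counts, the monomials surviving in the quotient are exactly the integer points of $\mathcal Q_{M,N}$.

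It then remains to count these points. Writing the two strict inequalities for integers as $2Y-N+2\le X\le Y+M-2$ and reading the union of half-planes $\{X\ge0\}\cup\{Y\ge0\}$ as the removal of the open third quadrant, the count splits in two. For $Y\ge0$ every admissible $X$ is allowed, giving $M+N-3-Y$ points for each $Y=0,\dots,M+N-3$, hence $\sum_{Y=0}^{M+N-3}(M+N-3-Y)=\tfrac{(M+N-2)(M+N-3)}{2}$; for $Y<0$ one must add $X\ge0$, giving $Y+M-1$ points for each $Y=-1,\dots,-(M-2)$, hence $\sum_{k=1}^{M-2}k=\tfrac{(M-1)(M-2)}{2}$. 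Summing the two yields $\delta$. The step I expect to be the real obstacle is the third one: determining $\Theta_{\mathcal F}$ near the corner and converting the two holomorphic-extension conditions into the precise inequalities with the correct weights and shifts (the factor $2$ and the shifts by $M-1$ and $N-1$, which come from the transformation of the generator and encode the numbers of separatrices), while making sure that no class is attributed simultaneously to both components. Once this dictionary between monomial vector fields and lattice points is in place, the chart computations and the final summation are routine.
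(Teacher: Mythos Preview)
Your proposal is essentially correct and follows the same strategy as the paper: compute $H^1(D,\Theta_{\mathcal F})$ via \v{C}ech cohomology with respect to a Stein cover, trivialize $\Theta_{\mathcal F}$ by a single vector field with isolated singularities on the overlap, identify the surviving classes with Laurent monomials $x_4^iy_4^j$, and count lattice points in $\mathcal Q_{M,N}$. Your lattice-point computation is exactly the right one and reproduces the two triangular summands.

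The one notable difference is your choice of cover. You use a two-set cover $\{A_1,A_2\}$ by neighborhoods of the two components, which forces you to verify $H^1(A_i,\Theta_{\mathcal F})=0$ directly; this is not immediate since each $A_i$ contains a full projective line with several singular points. The paper instead uses the three affine charts $\{V_2,V_3,V_4\}$ and invokes Siu's theorem to assume they are Stein, so acyclicity comes for free from Cartan's Theorem B. The paper then shows that the cocycle equations $\theta_{24}=\theta_2-\theta_4$, $\theta_{34}=\theta_3-\theta_4$ collapse to $\theta_{24}=\theta_2-\theta_3$ (resp.\ $\theta_{34}=\theta_3-\theta_2$), effectively reducing to a two-set computation anyway. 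So your two-set picture is the right heuristic, but the three-chart formulation is the safer way to justify it.

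On the step you flag as the obstacle: the paper's device is to take the explicit generator $\theta_{\mathrm{is}}=E^*\theta_f/(x_4^{M+N-2}y_4^{2M+N-3})$ on $V_3\cap V_4$ and $V_2\cap V_4$, and then simply pull back the coordinate changes to read off that sections on $V_2$ (resp.\ $V_3$) correspond to exponents satisfying $2j-i-(N-1)\ge 0$ (resp.\ $i-j-(M-1)\ge 0$). The shifts $N-1$, $M-1$ and the factor $2$ drop out of the denominator $x_4^{M+N-2}y_4^{2M+N-3}$ together with the transition maps, with no separate bookkeeping for the separatrices needed. Once you have this normalization, your dictionary with $\mathcal Q_{M,N}$ is immediate.
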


\begin{proof}
We consider the vector field $\theta_f$ with an isolated singularity defined by
$$\theta_f=-\frac{\partial f}{\partial x}\frac{\partial}{\partial y}+\frac{\partial f}{\partial y}\frac{\partial}{\partial x}.$$
We consider the following covering of the divisor introduced above $D=V_2\cup V_3\cup V_4.$
The sheaf $\Theta_{\mathcal{F}}$ is a coherent sheaf, and according to Siu \cite{5}, the covering $\{V_2,V_3,V_4\}$ can be supposed to be Stein. Thus, the first cohomology group $H^1(D,\Theta_{\mathcal{F}})$ is given by the quotient
$$H^1(D,\Theta_{\mathcal{F}})=\frac{H^0(V_2\cup V_4,\Theta_{\mathcal{F}})\oplus H^0(V_3\cap V_4,\Theta_{\mathcal{F}})}{\delta\big(H^0(V_2,\Theta_{\mathcal{F}})\oplus H^0(V_3,\Theta_{\mathcal{F}})\oplus H^0(V_4,\Theta_{\mathcal{F}})\big)},$$
where $\delta$ is the operator defined by $\delta(X_2,X_3,X_4)=(X_2-X_4,X_3-X_4)$.
In order to compute each term of the quotient, we consider the following vector field
$$\theta_{\textup{is}}=\frac{E^*\theta_f}{x_4^{M+N-2}y_4^{2M+N-3}}.$$
This vector field has isolated singularities and defines the foliation on the two intersections $V_2\cap V_4$ and $V_3\cap V_4$. Therefore, we have $H^0(V_2\cup V_4,\Theta_{\mathcal{F}})=\mathcal{O}(V_2\cup V_4)\cdot\theta_{\textup{is}}$ and\\ $H^0(V_3\cup V_4,\Theta_{\mathcal{F}})=\mathcal{O}(V_3\cup V_4)\cdot\theta_{\textup{is}}$, and each element $\theta_{24}$ in $H^0(V_2\cup V_4,\Theta_{\mathcal{F}})$ and $\theta_{34}$ in $H^0(V_3\cup V_4,\Theta_{\mathcal{F}})$ can be written
$$\theta_{24}=\left( \sum_{i\in \mathbb{N},j\in \mathbb{Z}}\lambda_{ij}x_4^iy_4^j\right) \cdot\theta_{\textup{is}} \textup{ and } \theta_{34}=\left( \sum_{i\in \mathbb{Z},j\in \mathbb{N}}\lambda_{ij}x_4^iy_4^j\right) \cdot\theta_{\textup{is}}.$$
Similarly, we find that the elements $\theta_{2}$ in $H^0(V_2,\Theta_{\mathcal{F}})$ and $\theta_{3}$ in $H^0(V_3,\Theta_{\mathcal{F}})$ can be written
$$\theta_{2}=\left( \sum_{i,j\in \mathbb{N}}\alpha_{ij}x_4^jy_4^{2j-i-(N-1)}\right) \cdot\theta_{\textup{is}} \textup{ and } \theta_{3}=\left( \sum_{i,j\in \mathbb{N}}\beta_{ij}x_4^{i-j-(M-1)}y_4^i\right) \cdot\theta_{\textup{is}}.$$
The cohomological equation describing $H^1(D,\Theta_{\mathcal{F}})$ is thus equivalent to 
$$\left\lbrace \begin{array}{rl}
\theta_{24} & =\theta_2-\theta_4 \\
  0 & =\theta_3-\theta_4  \\
\end{array}\right. \Longleftrightarrow \theta_{24}=\theta_2-\theta_3
\textup{ and }
\left\lbrace \begin{array}{rl}
0 & =\theta_2-\theta_4  \\
  \theta_{34} & =\theta_3-\theta_4 
\end{array}\right. \Longleftrightarrow \theta_{34}=\theta_3-\theta_2,$$
which means that its dimension corresponds to the
number of elements which do not have a solution in any of the above two systems. 
This implies that the dimension of the cohomology group is equal to the number of integer points in the region $\mathcal{Q}_{M,N}$ that can be expressed by the following formula
$$\delta = \frac{(M+N-2)(M+N-3)}{2}+\frac{(M-1)(M-2)}{2}.$$
\end{proof}

\section{The local normal forms.}
We denote by $\mathcal{P}$ the following open set of $\mathbb{C}^{\delta}$
\begin{multline*}
\mathcal{P} = \left\lbrace (\cdots,a_{k,i},\cdots,b_{k',i'},\cdots) \textup{ such that }  a_{1,i}\neq 0, b_{1,j}\neq 0,1 \textup{ and }  \right. \\
\left. a_{1,i}\neq a_{1,j}, b_{1,i'}\neq b_{1,j'} \textup{ for } i\neq j \textup{ and } i'\neq j'\right\rbrace,
\end{multline*}
where the indexes $k$,$i$,$k'$ and $i'$ satisfy the following system of inequalities
\[\left\lbrace 
\begin{array}{rcl}
0 \leq & k-1 & \leq i-1\\
-(N-2) \leq & 2k-i-1 & \leq 2i-2\\
-(M-2) \leq & k'-i'-1 & \leq  N-3+2i'\\
0 \leq & k'-1 & \leq N-2+2i'
\end{array}
\right.  \]
For $p\in\mathcal{P}$, we define the analytic normal form by 
$$N_p^{(M,N)}=xy(y+x^2)\prod_{i=1}^{N-1}\left(y+\sum_{k=1}^{i}a_{k,i}xy^{k-1}\right)\prod_{i=1}^{M-2}\left(y+\sum_{k=1}^{N-1+2i}b_{k,i}x^{k+1}\right).$$
We consider the saturated foliation $\mathcal{F}_p^{(M,N)}$ defined by the one-form $dN_p^{(M,N)}$ on $\mathbb{C}^{2+\delta}$.

\medskip

\noindent The main result of this article is the following:
\begin{theorem}\label{main}
For any $p_0$ in $\mathcal{P}$ the germ of unfolding $\left\lbrace \mathcal{F}_p^{(M,N)}, p\in (\mathcal{P},p_0)\right\rbrace $ is a universal equireducible unfolding of the foliation $\mathcal{F}_{p_0}^{(M,N)}$.
\end{theorem}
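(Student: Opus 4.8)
The plan is to reduce the statement to Mattei's infinitesimal criterion for universality \cite{3}: an equireducible unfolding of a germ of foliation with an isolated singularity is universal as soon as its Kodaira--Spencer map, sending the tangent space of the parameter space to $H^1(D,\Theta_{\mathcal{F}})$, is an isomorphism. Two ingredients are therefore needed: that the family $\{\mathcal{F}_p^{(M,N)}\}$ is an equireducible unfolding of $\mathcal{F}_{p_0}^{(M,N)}$, and that the associated Kodaira--Spencer map is bijective. Since Proposition~1.1 identifies $H^1(D,\Theta_{\mathcal{F}})$ with a space of dimension $\delta$ presented by monomial classes $x_4^a y_4^b\cdot\theta_{\textup{is}}$ indexed by the integer points of $\mathcal{Q}_{M,N}$, and since $\mathcal{P}$ is by definition an open subset of $\mathbb{C}^{\delta}$, the source and target of the Kodaira--Spencer map have the same dimension; it will thus be enough to prove injectivity.

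First I would establish equireducibility. For every $p$ in a neighbourhood of $p_0$ in $\mathcal{P}$ I would check that the curve $\{N_p^{(M,N)}=0\}$ carries exactly the configuration of branches of $f_{M,N}$: the fixed factor $xy(y+x^2)$ pins down the skeleton, while the open conditions defining $\mathcal{P}$ — the leading coefficients $a_{1,i}$ and $b_{1,j}$ nonzero, the value $1$ excluded for each $b_{1,j}$, and the pairwise distinctness of the $a_{1,i}$ and of the $b_{1,j}$ — guarantee that the $N$ transverse branches and the $M$ branches tangent to order two remain smooth and mutually separated after the two blow-ups of $E$. Consequently each $\mathcal{F}_p^{(M,N)}$ is reduced by the same morphism $E$, with the same exceptional divisor $D$ and the same distribution of singular points along $D$, so the unfolding is equireducible and the reduced foliation is still generated on the charts by $\theta_{\textup{is}}$.

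Next I would compute the Kodaira--Spencer map explicitly. Differentiating the generator $\theta_{N_p}=-\partial_x N_p\,\partial_y+\partial_y N_p\,\partial_x$ in a parameter direction $\partial/\partial a_{k,i}$ or $\partial/\partial b_{k',i'}$ yields $\theta_{\dot N}$ with $\dot N=\partial_p N_p^{(M,N)}$, and the corresponding cocycle is governed by the logarithmic derivative $\dot N/N_p^{(M,N)}$ read on the intersections $V_2\cap V_4$ and $V_3\cap V_4$. Using the expression $E(x_4,y_4)=(x_4y_4,x_4y_4^2)$, I would show that each such logarithmic derivative contributes a single Laurent monomial in $(x_4,y_4)$ times $\theta_{\textup{is}}$, and that as $(k,i)$ and $(k',i')$ range over the system of inequalities defining $\mathcal{P}$ these exponents sweep out precisely the integer points of $\mathcal{Q}_{M,N}$. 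The normal form has been designed so that this correspondence is a bijection between the coordinate basis of $T_{p_0}\mathcal{P}$ and the monomial basis of $H^1(D,\Theta_{\mathcal{F}})$, whence the Kodaira--Spencer map is an isomorphism and universality follows.

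The main obstacle is this final index matching. One must prove that each parameter derivative produces exactly one nonzero class and that no two of them coincide or fall into the image of the coboundary operator $\delta\big(H^0(V_2,\Theta_{\mathcal{F}})\oplus H^0(V_3,\Theta_{\mathcal{F}})\oplus H^0(V_4,\Theta_{\mathcal{F}})\big)$ of Proposition~1.1. This amounts to a careful bookkeeping of the monomial supports of $\partial_{a_{k,i}}N_p^{(M,N)}$ and $\partial_{b_{k',i'}}N_p^{(M,N)}$ after the change of charts, together with a verification that the two systems of inequalities — the one cutting out $\mathcal{P}$ and the one cutting out $\mathcal{Q}_{M,N}$ — describe the same lattice points. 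Once this is made explicit, injectivity, and hence by the equality of dimensions bijectivity of the Kodaira--Spencer map, is immediate, and Theorem~\ref{main} follows from Mattei's criterion.
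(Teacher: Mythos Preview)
Your overall architecture is right and matches the paper: invoke Mattei's criterion, observe that the parameter space and $H^1(D,\Theta_{\mathcal{F}})$ have the same dimension $\delta$, and reduce Theorem~\ref{main} to showing that the Kodaira--Spencer map is injective.

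The gap is in your third paragraph. You assert that ``each such logarithmic derivative contributes a single Laurent monomial in $(x_4,y_4)$ times $\theta_{\textup{is}}$'' and that the correspondence between the coordinate basis of $T_{p_0}\mathcal{P}$ and the monomial basis of $H^1(D,\Theta_{\mathcal{F}})$ is a bijection realised one parameter--one monomial. This is false. In the paper's computation (Lemmas~2.1 and~2.2) the Kodaira--Spencer matrix is \emph{not} a permutation matrix: already at the first level $k=k'=1$ the image of $\partial/\partial b_{1,i}$ decomposes over the monomials $x_4^{-1},\ldots,x_4^{-(M-2)}$ with coefficients proportional to successive powers of $b_{1,i}$, producing a genuine Vandermonde block $M_4$; the block $M_1$ coming from the $\partial/\partial a_{1,i}$ is even more involved. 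The invertibility therefore hinges precisely on the open conditions $a_{1,i}\neq a_{1,j}$, $b_{1,i}\neq b_{1,j}$ (and the nonvanishing of certain B\'ezout coefficients $\tilde U(-1/b_{1,i})$, $\tilde K(-a_{1,i})$), not on a lattice bijection.

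What actually needs to be done is: (i) compute the cocycles $(X^{(2,4)}_{1,i},X^{(3,4)}_{1,i})$ and $(Y^{(2,4)}_{1,i},Y^{(3,4)}_{1,i})$ at level $1$ by solving the trivialisation equations $\partial_{p}\tilde N_p=\alpha\,\partial_{x_l}\tilde N_p+\beta\,\partial_{y_l}\tilde N_p$ on each chart via B\'ezout identities for the reduced one--variable polynomials $P,Q,A,J$; (ii) read off their Laurent expansions against $\theta_{\textup{is}}$ and show the resulting $(M+N-3)\times(M+N-3)$ matrix $A_1$ is invertible (this is where the Vandermonde determinants appear); (iii) use the relations
\[
\frac{\partial\tilde N_p}{\partial a_{k,i}}=x_4^{k-1}y_4^{2k-2}\,\frac{\partial\tilde N_p}{\partial a_{1,i}},\qquad
\frac{\partial\tilde N_p}{\partial b_{k,i}}=x_4^{k-1}y_4^{k-1}\,\frac{\partial\tilde N_p}{\partial b_{1,i}}
\]
to propagate to higher levels, obtaining a block lower--triangular matrix whose diagonal blocks are principal minors of $M_1$ and $M_4$ and are therefore again invertible. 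Your ``bookkeeping'' picture, in which the two systems of inequalities describe the same lattice points and that alone yields injectivity, would only work if the matrix were monomial; it is not, and the heart of the proof is the determinant computation you have elided.
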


\noindent In particular, for any equireducible unfolding $\mathcal{F}_t$, $t \in (\mathcal{T}, t_0)$ which defines $\mathcal{F}_{p_0}^{(M.N)}$ for $t=t_0$, there exists a map $\lambda : (\mathcal{T},t_0) \longrightarrow (\mathcal{P},p_0)$ such that the family $\mathcal{F}_t$ is analytically equivalent to $N_{\lambda(t)}$. Furthermore, the differential of  $\lambda$ at the point $t_0$ is unique. As for the uniqueness of the map $\lambda$, it follows from Theorem \ref{main2}.

\medskip

\noindent Consider the sheaf $\Theta_{\mathcal{F}_{p_0}^{(M,N)}}$ of germs of vector fields tangent to the desingularized foliation $\tilde{\mathcal{F}}_{p_0}^{(M,N)}$ of the foliation $\mathcal{F}_{p_0}^{(M,N)}$ induced by $dN^{(M,N)}_{p_0} = 0$. According to \cite{3}, one can define the derivative of the deformation as a map from $T_{p_0}\mathcal{P}$ into $H^1\left( D,\Theta_{\mathcal{F}^{(M,N)}_{p_0}}\right) $. We denote this map by $T\mathcal{F}_{p_0}^{(M,N)}$: since (see \cite{3}) after desingularization any equireducible unfolding is locally analytically trivial, there exists $X_l$, $l\in\{2,3,4\}$, a collection of local vector fields solutions of 
\begin{equation}
\frac{\partial\tilde{N}_p^{(M,N)}}{\partial p_{1,i}}=\alpha_{1,i}(x_l,y_l,a_{1,i},b_{1,i})\frac{\partial\tilde{N}_p^{(M,N)}}{\partial x_l}+\beta_{1,i}(x_l,y_l,a_{1,i},b_{1,i})\frac{\partial\tilde{N}_p^{(M,N)}}{\partial y_l},
\end{equation}
where $p_{1,i}\in\{a_{1,i},b_{1,i}\}$. The cocycle $\{X_{2,4}=X_{2}-X_{4},X_{3,4}=X_{3}-X_{4}\}$ evaluated at $p=p_0$ is the image of the direction $\frac{\partial}{\partial p_{1,i}}$ in $H^1\left( D,\Theta_{\mathcal{F}^{(M,N)}_{p_0}}\right) $ by $T\mathcal{F}_{p_0}^{(M,N)}$. To prove Theorem \ref{main}, we will make use of the following result:

\begin{theorem*}[\cite{3}]
The unfolding $\mathcal{F}_t$, $t \in (\mathcal{T}, t_0)$ is universal among the equireducible unfoldings of $\mathcal{F}_{t_0}$ if and only if the map $T\mathcal{F}_{t_0}: T_{t_0}\mathcal{T} \longrightarrow H^1(D,\Theta_{\mathcal{F}})$ is a bijective map.
\end{theorem*}

\noindent Theorem \ref{main} is thus a consequence of the following proposition.

\begin{proposition}
We consider the unfolding $\tilde{\mathcal{F}}_p^{(M,N)}$ defined by the blowing up of $N_p^{(M,N)}$, $p\in (\mathcal{P},p_0)$. The image of the family $\left\lbrace \frac{\partial}{\partial a_{k,i}},\frac{\partial}{\partial b_{k,i}}\right\rbrace _{k,i}$ in $H^1\left( D,\Theta_{\mathcal{F}^{(M,N)}_{p_0}}\right) $ by $T\mathcal{F}_{p_0}^{(M,N)}$ is linearly free.
\end{proposition}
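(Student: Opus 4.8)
The plan is to convert the statement into a dimension count. A direct enumeration of the index system defining $\mathcal{P}$ shows that the family $\left\{\frac{\partial}{\partial a_{k,i}},\frac{\partial}{\partial b_{k,i}}\right\}$ has exactly $\delta = \dim_{\mathbb{C}} H^1\!\left(D,\Theta_{\mathcal{F}^{(M,N)}_{p_0}}\right)$ elements: the $a$-parameters contribute $\frac{N(N-1)}{2}$ terms and the $b$-parameters the remaining ones, and their sum equals the $\delta$ computed in the previous proposition. Since $T\mathcal{F}_{p_0}^{(M,N)}$ is then a linear map between spaces of the same finite dimension, proving that the images are linearly free is equivalent to proving that $T\mathcal{F}_{p_0}^{(M,N)}$ is bijective, which by the cited theorem of \cite{3} yields Theorem \ref{main}. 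Hence the whole weight of the argument falls on injectivity, and it suffices to exhibit a triangular expression of the $\delta$ image classes in the monomial basis of $H^1$ produced in the proof of the dimension proposition.

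First I would make the images explicit. Differentiating the product defining the normal form gives, for $\tilde N_p^{(M,N)}=E^*N_p^{(M,N)}$,
$$\frac{\partial \tilde N_p^{(M,N)}}{\partial a_{k,i}}=\frac{\tilde N_p^{(M,N)}}{y+\sum_{k'}a_{k',i}\,xy^{k'-1}}\, E^*\!\left(xy^{k-1}\right),\qquad \frac{\partial \tilde N_p^{(M,N)}}{\partial b_{k,i}}=\frac{\tilde N_p^{(M,N)}}{y+\sum_{k'}b_{k',i}\,x^{k'+1}}\, E^*\!\left(x^{k+1}\right).$$
Solving the division equation $(1)$ on $V_2$, $V_3$ and $V_4$ produces the local vector fields $X_2,X_3,X_4$, the solvability on each chart being guaranteed by equireducibility as recalled in \cite{3}. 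The image of $\frac{\partial}{\partial p}$ is then the class of the cocycle $(X_{2}-X_{4},X_{3}-X_{4})$; since each difference annihilates $\tilde N_p^{(M,N)}$, it is a multiple $g_{24}\cdot\theta_{\textup{is}}$, resp. $g_{34}\cdot\theta_{\textup{is}}$, of the generator $\theta_{\textup{is}}$ on the intersections $V_2\cap V_4$ and $V_3\cap V_4$. Reading the Laurent coefficients of $g_{24},g_{34}$ in the coordinates $(x_4,y_4)$ places each image inside the presentation of $H^1$ furnished by the dimension proposition.

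The heart of the argument is a leading-term analysis. To each parameter I would attach the dominant monomial $x_4^{a}y_4^{b}$ of its image class: under $E(x_4,y_4)=(x_4y_4,x_4y_4^2)$ the factor $xy^{k-1}$ becomes $x_4^{k}y_4^{2k-1}$ and $x^{k+1}$ becomes $x_4^{k+1}y_4^{k+1}$, so, after clearing the normalisation $x_4^{M+N-2}y_4^{2M+N-3}$ of $\theta_{\textup{is}}$, the $a$-family lands on the affine pattern $Y=2X+\text{const}$ carried by $\theta_2$ and the $b$-family on the pattern carried by $\theta_3$. These are exactly the monomials that the cohomological equation $\theta_{24}=\theta_2-\theta_3$ cannot eliminate, namely the integer points of $\mathcal{Q}_{M,N}$. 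Ordering the parameters compatibly with this indexing makes the matrix of $T\mathcal{F}_{p_0}^{(M,N)}$ in the monomial basis triangular, and the genericity built into $\mathcal{P}$ (non-vanishing and pairwise distinctness of the $a_{1,i}$ and $b_{1,j}$) forces the diagonal coefficients to be nonzero at $p_0$. Triangularity with nonzero diagonal gives the linear freedom.

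The main obstacle I anticipate is the explicit resolution of $(1)$ for the directions with $k\geq 2$: equation $(1)$ is written only for the first directions $a_{1,i},b_{1,i}$, and for the higher ones one must verify that the division is still solvable on each chart and, more delicately, that the leading monomial of the resulting cocycle is the predicted one and genuinely survives in $\mathcal{Q}_{M,N}$ rather than being absorbed by a coboundary. Controlling this amounts to tracking how the pullback and the division by $x_4^{M+N-2}y_4^{2M+N-3}$ shift the Newton support across the overlap $V_3\cap V_4$, and to checking that the $a$- and $b$-families do not collide, i.e. that no cancellation occurs between the two product factors on the intersections. Once this bookkeeping is settled, the triangularity, and hence the linear freedom of the images, follows.
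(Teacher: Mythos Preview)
Your overall strategy---reduce to a dimension count and show the matrix of $T\mathcal{F}_{p_0}^{(M,N)}$ in the monomial basis of $H^1$ is invertible---matches the paper's. The observation that the solutions for higher $k$ are obtained from those at level $k=1$ by multiplication by the monomials $x_4^{k-1}y_4^{2k-2}$ (for the $a$'s) and $x_4^{k-1}y_4^{k-1}$ (for the $b$'s) is correct and is exactly what the paper uses to get a \emph{block} lower-triangular structure, the blocks being indexed by the level $k$.

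The gap is in your claim that a suitable ordering makes the full matrix \emph{triangular} with nonzero diagonal. This is false already at level $k=1$. For the $b$-parameters the relevant coefficients are, in the paper's notation,
\[
m_{ji}\;=\;\frac{(-1)^{j+1}b_{1,i}^{\,2M-j-3}}{(2M+N)\prod_l a_{1,l}}\,\tilde U\!\left(\frac{-1}{b_{1,i}}\right),\qquad 1\le i,j\le M-2,
\]
so every direction $\partial/\partial b_{1,i}$ contributes nontrivially to \emph{every} basis monomial $x_4^{-j}$; there is no dominant monomial attached to a single $i$, and no ordering of the $b_{1,i}$ produces a triangular shape. Invertibility of this block is a Vandermonde determinant in the $b_{1,i}$, which is precisely where pairwise distinctness is used---not to make a diagonal entry nonzero, but to make a full determinant nonzero. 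The $a$-block $M_1$ is worse: its last row mixes two terms, and the paper has to compute its determinant explicitly (it involves the factor $\frac{2M+2N-1}{2M+N}$ and another Vandermonde product); moreover, as the Remark after Lemma~2.2 points out, individual entries of that row can actually vanish, so a diagonal-entry argument cannot work. Your leading-term heuristic correctly captures the propagation between levels $k$ and $k+1$, but it does not separate the parameters within a fixed level. To complete the argument you must establish, block by block, that the Vandermonde-type matrices $M_1$, $M_4$ and their principal minors $M_1^k$, $M_4^k$ are invertible; this is the substantive computation the paper carries out in Lemmas~2.1 and~2.2.
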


\noindent Let $S$ be the subset of $\mathcal{P}$ defined by its elements at the first level $k=k'=1$ i.e.
$$S= \left\lbrace (\cdots,a_{1,i},\cdots,b_{1,i'},\cdots) \textup{ such that } 1\leq i\leq N-1 \textup{ and }  1\leq i'\leq M-2\right\rbrace .$$
We denote by $A_1$ the square matrix of size $M+N-3$, representing the decomposition of the images of $\left\lbrace \frac{\partial}{\partial a_{1,i}},\frac{\partial}{\partial b_{1,i}}\right\rbrace $ in $H^1\left( D,\Theta_{\mathcal{F}^{(M,N)}_{p_0}}\right) $ by $T\mathcal{F}_{p_0}^{(M,N)}$ on the corresponding basis. We note that the corresponding basis is in bijection with the set 
$$\left\lbrace x^{\alpha}y^{\beta}/(\alpha,\beta)=(0,1-i),1\leq i\leq N-1 \textup{ or } (\alpha,\beta)=(-i,0), 1\leq i\leq M-2\right\rbrace .$$
Therefore, the proof of the proposition results from the following two lemmas.

\begin{lemma}
The matrix $A_1$ is invertible.
\end{lemma}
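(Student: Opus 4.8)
The plan is to explicitly compute the entries of the matrix $A_1$ and show that, after a suitable ordering of the basis, it is triangular (or block-triangular) with nonzero diagonal entries, hence invertible. First I would carry out the local solution of the unfolding equation $(1)$ at the first level $k=k'=1$. For each deformation direction $\frac{\partial}{\partial a_{1,i}}$ and $\frac{\partial}{\partial b_{1,i}}$ one differentiates $\tilde N_p^{(M,N)}$ with respect to the corresponding parameter and solves for the local vector fields $X_l$; since the factor being perturbed enters $N_p^{(M,N)}$ linearly, the derivative $\frac{\partial \tilde N_p}{\partial p_{1,i}}$ is an explicit monomial times the remaining product, and the division producing $\alpha_{1,i},\beta_{1,i}$ can be performed chart by chart in the coordinates $(x_l,y_l)$.

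Next I would form the cocycle $\{X_{2,4},X_{3,4}\}$, evaluate it at $p=p_0$, and expand its class in $H^1(D,\Theta_{\mathcal F})$ against the basis identified in Proposition~1.1 --- that is, the monomials $x^\alpha y^\beta$ with $(\alpha,\beta)=(0,1-i)$ for $1\le i\le N-1$ and $(\alpha,\beta)=(-i,0)$ for $1\le i\le M-2$. The key computation is to read off the leading monomial contributed by the direction $\frac{\partial}{\partial a_{1,i}}$ (respectively $\frac{\partial}{\partial b_{1,i}}$) in the coordinates $(x_4,y_4)$ via the change of charts $E(x_4,y_4)=(x_4y_4,x_4y_4^2)$ and the normalization by $\theta_{\mathrm{is}}$. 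I expect that the direction $\frac{\partial}{\partial a_{1,i}}$ produces, as its dominant term, precisely the basis monomial indexed by $(0,1-i)$, and similarly $\frac{\partial}{\partial b_{1,i}}$ produces the monomial indexed by $(-i,0)$, with all lower-order contributions landing in basis elements of strictly higher index. This would exhibit $A_1$ as triangular with nonzero diagonal, forcing $\det A_1\neq 0$.

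The natural ordering is to split $A_1$ into the block coming from the $a_{1,i}$ directions (the $N-1$ monomials on the line $\alpha=0$) and the block coming from the $b_{1,i}$ directions (the $M-2$ monomials on the line $\beta=0$). I would argue that these two families of directions are essentially decoupled at first level, because perturbing a linear factor $y+a_{1,i}x$ affects only the part of the divisor attached to the first blow-up while perturbing $y+b_{1,i}x^2$ affects only the part attached to the second, so the matrix is block diagonal and it suffices to prove each diagonal block is triangular with nonzero diagonal. Within each block, the strict monotonicity of the index $i$ against the exponent of the relevant variable gives the triangular structure, and the genericity conditions defining $\mathcal P$ (the $a_{1,i}$ pairwise distinct and nonzero, the $b_{1,i}$ pairwise distinct and distinct from $0,1$) guarantee that no diagonal entry vanishes.

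The main obstacle I anticipate is the bookkeeping of the division $(1)$ across the three charts and the passage to the $(x_4,y_4)$ coordinates in which the cohomology basis is expressed: one must verify that the local fields $X_l$ glue into a genuine cocycle and that the normalization by $\theta_{\mathrm{is}}=E^*\theta_f/(x_4^{M+N-2}y_4^{2M+N-3})$ sends the leading term of each $\frac{\partial}{\partial p_{1,i}}$ exactly onto its designated basis monomial, with no accidental cancellation against another direction. Controlling these leading terms --- and in particular confirming that the off-diagonal contributions respect the chosen ordering rather than spilling into lower-index basis elements --- is where the real work lies; once the triangular shape is established, invertibility is immediate.
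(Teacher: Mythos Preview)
Your overall block decomposition is on the right track: in the paper's computation one off-diagonal block ($M_3$, coming from the $a$-directions against the $1/x_4^j$ basis) does vanish, so $A_1$ is block triangular and it suffices to show the diagonal blocks $M_1$ and $M_4$ are invertible. But your central expectation --- that each diagonal block is itself triangular, with $\partial/\partial a_{1,i}$ hitting the single monomial $y_4^{1-i}$ as dominant term and the genericity conditions ensuring nonzero diagonal --- is wrong, and this is where the proposal breaks down.

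In the paper's explicit computation the $(j,i)$-entry of $M_4$ is (up to a common nonzero factor) $(-1)^{j+1}b_{1,i}^{2M-j-3}\tilde U(-1/b_{1,i})$: every column has nonzero entries in every row, and the invertibility comes from the Vandermonde determinant in the $b_{1,i}$'s, not from any triangular shape. Likewise $M_1$ has entries proportional to $a_{1,i}^{-(N+j)}\tilde K(-a_{1,i})$, again a full Vandermonde-type matrix (with an extra correction in its last row that forces a more delicate determinant computation). The actual argument requires solving the unfolding equation via a B\'ezout identity $Q\wedge Q'=WQ'+ZQ$ to produce the local fields, expanding the resulting cocycle as a Laurent series, and recognizing the Vandermonde structure; the pairwise-distinctness hypotheses in $\mathcal P$ enter through the nonvanishing of these Vandermonde determinants and through $\tilde U(-1/b_{1,i})\neq 0$, $\tilde K(-a_{1,i})\neq 0$ (which follow from the B\'ezout relation), not through nonzero diagonal entries. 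So the ``leading monomial'' heuristic you rely on does not hold, and the triangular argument cannot be completed as stated.
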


\begin{proof}
The matrix $A_1$ is given by
$$A_1=\bordermatrix{ & \frac{\partial}{\partial a_{1,1}} & \frac{\partial}{\partial a_{1,2}} & \ldots & \frac{\partial}{\partial a_{1,N-1}} & & \frac{\partial}{\partial b_{1,1}} & \frac{\partial}{\partial b_{1,2}} & \ldots & \frac{\partial}{\partial b_{1,M-2}}\cr
\frac{1}{y_4^{N-2}} &  &  &  &  & \vline &  &  &  &   \cr
\frac{1}{y_4^{N-3}} &  &  &  &  & \vline & &  &  &   \cr
\vdots &  &  & M_1 &  &  \vline & &   M_2 &  &   \cr
\frac{1}{y_4} &  &  &  &  & \vline &  &  &  &   \cr
1 &  &  &  &  & \vline &  &  &  &   \cr
% & \cdots\cdots & \cdots\cdots & \cdots\cdots & \cdots\cdots & \cdots\cdots & \cdots\cdots & \cdots\cdots & \cdots\cdots & \cdots\cdots  \cr
 \hline  \cr
\frac{1}{x_4} &  &  &  &  & \vline &  &  &  &   \cr
\frac{1}{x_4^2} &  &  &  &  &  \vline &  &  &  &   \cr
\vdots &  &  & M_3 &  & \vline &  & M_4 &  &   \cr
\frac{1}{x_4^{M-2}} &  &  &  &  & \vline &  &  &  &   \cr
}.$$

\noindent We start by computing the matrix $M_3$. In the chart $V_4$, we have to solve
\begin{equation}
\frac{\partial\tilde{N}_p^{(M,N)}}{\partial a_{1,i}}=\alpha_{1,i}(x_4,y_4,a_{1,i},b_{1,i})\frac{\partial\tilde{N}_p^{(M,N)}}{\partial x_4}+\beta_{1,i}(x_4,y_4,a_{1,i},b_{1,i})\frac{\partial\tilde{N}_p^{(M,N)}}{\partial y_4}.
\end{equation}
Since $E$ is defined on $V_4$ by $E(x_4,y_4)=(x_4y_4,x_4y_4^2)$, we find that 
\begin{multline*}
\tilde{N}_p^{(M,N)}(x_4,y_4)=x_4^{M+N}y_4^{2M+N}(1+x_4)\\
\prod_{i=1}^{N-1}\left(y_4+\sum_{k=1}^{i}a_{k,i}x_4^{k-1}y_4^{2k-2}\right)\prod_{i=1}^{M-2}\left(1+\sum_{k=1}^{N-1+2i}b_{k,i}x_4^{k}y_4^{k-1}\right).
\end{multline*}
We have
$$\frac{\partial\tilde{N}_p^{(M,N)}}{\partial a_{1,i}} = \frac{\tilde{N}_p^{(M,N)}}{y_4+\sum_{k=1}^{i}a_{k,i}x_4^{k-1}y_4^{2k-2}}= \frac{y_4^{2M+N}}{a_{1,i}}\left( Q(x_4)+y_4(...)\right) $$
with 
$$Q(x_4)=x_4^{M+N}(1+x_4)\prod_{j=1}^{N-1}a_{1,j}\prod_{j=1}^{M-2}(1+b_{1,j}x_4)$$
and where the suspension points (...) correspond to auxiliary holomorphic functions in $(x_4,y_4)$.
Since $\tilde{N}_p^{(M,N)}=y_4^{2M+N}\left( Q(x_4)+y_4(...)\right) $, we find that 
\begin{equation}
\begin{array}{rcl}
\frac{\partial\tilde{N}_p^{(M,N)}}{\partial x_4}&=&y_4^{2M+N}\left( Q'(x_4)+y_4(...)\right) \\
\frac{\partial\tilde{N}_p^{(M,N)}}{\partial y_4}&=&(2M+N)y_4^{2M+N-1}Q(x_4)+y_4^{2M+N}(...) 
\end{array}
\end{equation}
Setting $\beta_{1,i}=y_4\tilde{\beta}_{1,i}$, we deduce from $(2)$ that 
\begin{equation}
\frac{Q(x_4)}{a_{1,i}}=\alpha_{1,i}(x_4,0)Q'(x_4)+(2M+N)\tilde{\beta}_{1,i}(x_4,0)Q(x_4)+y_4(...) 
\end{equation}
Using B\'{e}zout identity, there exist polynomials $W$ and $Z$ in $x_4$ such that
$$Q\wedge Q' = WQ' + ZQ$$
where $Q\wedge Q'$ is the great common divisor of $Q$ and $Q' $. We can choose the polynomial function $W$ to be of degree $M-1$. We denote by 
$$S(x_4)=x_4(1+x_4)\prod_{i=1}^{M-2}(1+b_{1,i}x_4)$$
the polynomial function satisfying $Q=(Q\wedge Q')S$. Therefore we obtain a solution of $(2)$ in the chart $V_4$ of the form
$$\begin{array}{rcl}
\alpha_{1,i} & = & \frac{W(x_4)S(x_4)}{a_{1,i}}+y_4(...)\\
\beta_{1,i} & = & \frac{y_4}{2M+N}\frac{Z(x_4)S(x_4)}{a_{1,i}}+y_4^2(...)\\
\textup{i.e. } X_{1,i}^{(4)} & = & \frac{W(x_4)S(x_4)}{a_{1,i}}\frac{\partial}{\partial x_4}+y_4(...).
\end{array}$$
Similarly, in the chart $V_3$ we write
$$\tilde{N}_p^{(M,N)}=x_3^{2M+N}(P(y_3)+x_3(...))$$
with
$$P(y_3)=y_3(y_3+1)\prod_{j=1}^{N-1}a_{1,j}\prod_{j=1}^{M-2}(y_3+b_{1,j}).$$
We set $P\wedge P' = UP' + VP$ and $P=(P\wedge P')R$ with 
$$R=y_3(y_3+1)\prod_{i=1}^{M-2}(y_3+b_{1,i}).$$ 
Also, we can assume that the degree of $U$ is $M-1$ and so we obtain the solution
$$X_{1,i}^{(3)}=\frac{U(y_3)R(y_3)}{a_{1,i}}\frac{\partial}{\partial y_3}+x_3(...).$$
To compute the cocycle we write $X_{1,i}^{(3)}$ in the chart $V_4$. Using the standard change of coordinates $x_4=1/y_3$ and $y_4=x_3y_3$ and since we have
$$U(y_3)=\frac{\tilde{U}(x_4)}{x_4^{M-1}} \textup{ and } R(y_3)=\frac{S(x_4)}{x_4^{M+1}}$$
where $\tilde{U}$ is a polynomial function, we find the first part of the first term of the cocycle
$$X_{1,i}^{(3,4)}=X_{1,i}^{(3)}-X_{1,i}^{(4)}=-\frac{S(x_4)}{a_{1,i}}\left[ \frac{\tilde{U}(x_4)}{x_4^{2M-2}}+W(x_4)\right] \frac{\partial}{\partial x_4}+y_4(...).$$
Let $\Theta_0$ be a holomorphic vector field with isolated singularities defining $\tilde{\mathcal{F}}_{p_0}^{(M,N)}$ on $V_3 \cap V_4$. We have
$$X_{1,i}^{(3,4)}=\Phi_{1,i}^{(3,4)}\Theta_0.$$
We can choose $\Theta_0=\frac{E^*\Theta_{N_p^{(M,N)}}}{x_4^{M+N-2}y_4^{2M+N-3}}$ with $\Theta_{N_p^{(M,N)}}=\frac{\partial N_p^{(M,N)}}{\partial x}\frac{\partial}{\partial y}-\frac{\partial N_p^{(M,N)}}{\partial y}\frac{\partial}{\partial x}$. According to Proposition $(1.1)$, the set of the coefficients of the Laurent's series of $\Phi_{1,i}^{(3,4)}$ characterizes the class of $X_{1,i}^{(3,4)}$ in $H^1(D,\Theta_{\mathcal{F}^{(M,N)}_{p_0}})$. Now, according to $(3)$, we get the equality
$$\Phi_{1,i}^{(3,4)}=\frac{1}{(2M+N)a_{1,i}\prod_{j=1}^{N-1}a_{1,j}}\left[ \frac{\tilde{U}(x_4)}{x_4^{2M-2}}+W(x_4)\right] +y_4(...).$$
Since $\tilde{U}(x_4)$ is of degree $M-1$, then the coefficients of $1/x_4^l$ for $1\leq l\leq M-2$ in the Laurent series of  $\frac{\tilde{U}(x_4)}{x_4^{2M-2}}$ are zeros. So the matrix $M_3$ is the zero matrix.

\medskip

\noindent We proceed similarly to compute the matrix $M_4$. So, in the chart $V_4$, we have to solve the following equation
\begin{equation}
\frac{\partial\tilde{N}_p^{(M,N)}}{\partial b_{1,i}}=\eta_{1,i}(x_4,y_4,a_{1,i},b_{1,i})\frac{\partial\tilde{N}_p^{(M,N)}}{\partial x_4}+\gamma_{1,i}(x_4,y_4,a_{1,i},b_{1,i})\frac{\partial\tilde{N}_p^{(M,N)}}{\partial y_4}.
\end{equation}
Following the same algorithm, we obtain the second part of the first term of the cocycle
$$Y_{1,i}^{(3,4)}=Y_{1,i}^{(3)}-Y_{1,i}^{(4)}=-\frac{S(x_4)}{1+b_{1,i}x_4}\left[ \frac{\tilde{U}(x_4)}{x_4^{2M-3}}+x_4W(x_4)\right] \frac{\partial}{\partial x_4}+y_4(...).$$
Setting $Y_{1,i}^{3,4}=\Psi_{1,i}^{3,4}\Theta_0$, we obtain the following expression of $\Psi_{1,i}^{(3,4)}$
$$\Psi_{1,i}^{(3,4)}=\frac{1}{(2M+N)\prod_{j=1}^{N-1}a_{1,j}(1+b_{1,i}x_4)}\left[ \frac{\tilde{U}(x_4)}{x_4^{2M-3}}+x_4W(x_4)\right] +y_4(...).$$
Now, to study the invertibility of the matrix $M_4$, we write
$$\tilde{U}(x_4)=\sum_{l=0}^{M-1}u_lx_4^l \textup{ and } \frac{1}{1+b_{1,i}x_4}=\sum_{s=0}^{\infty}(-1)^{s}b_{1,i}^sx_4^s.$$
So, we obtain the following equality
$$\frac{\tilde{U}(x_4)}{(1+b_{1,i})x_4^{2M-3}}=\sum_{j=1}^{M-2}d_{ji}\frac{1}{x_4^{M-j-1}}+\frac{T(x_4)}{x_4^{2M-3}}+x_4(...)+\textup{cst},$$
where $T$ is a polynomial in $x_4$ of degree $M-2$ and $d_{ji}$ is given by 
$$d_{ji}=\sum_{r=0}^{M-1}(-1)^{M-r+j}u_rb_{1,i}^{M+j-r-2}=(-1)^{M+j}b_{1,i}^{M+j-2}\tilde{U}\left(\frac{-1}{b_{1,i}}\right).$$
This yields the following expression of $\Psi_{1,i}^{(3,4)}$
\begin{multline*}
\Psi_{1,i}^{(3,4)}=\frac{1}{(2M+N)\prod_{l=1}^{N-1}a_{1,l}}\\
\left[ \sum_{j=1}^{M-2}\frac{(-1)^{j+1}b_{1,i}^{2M-j-3}}{x_4^j}\tilde{U}\left( \frac{-1}{b_{1,i}}\right) +\frac{T(x_4)}{x_4^{2M-3}}+x_4(...)+\textup{cst}\right] +y_4(...).
\end{multline*}
Thus, the matrix $M_4=(m_{ji})_{1\leq i,j\leq M-2}$ is given by 
$$m_{ji}=\frac{(-1)^{j+1}b_{1,i}^{2M-j-3}}{(2M+N)\prod_{l=1}^{N-1}a_{1,l}}\tilde{U}\left( \frac{-1}{b_{1,i}}\right) \textup{ } \forall 1\leq i,j\leq M-2$$
which defines a Vandermonde matrix.
We note that $\tilde{U}\left( \frac{-1}{b_{1,i}}\right)$ is different from zero for all $1\leq i\leq M-2$ because the different values $\{-b_{1,i}\}_{1\leq i\leq M-2}$ are roots of the polynomial $P$ which satisfies the B\'{e}zout identity $P\wedge P'=UP' +VP$. So the matrix $M_4$ is invertible.

\medskip

\noindent Now we compute the second cocycle.
In the chart $V_4$, we can write $\tilde{N}_P^{(M,N)}$ as 
$$\tilde{N}_P^{(M,N)}=x_4^{M+N}\left( A(y_4)+y_4^{2M+N}x_4(...)\right) $$
where $A(y_4)=y_4^{2M+N}\prod_{j=1}^{N-1}(y_4+a_{1,j}).$
So, we obtain the following expressions
\begin{equation}
\begin{array}{rcl}
\frac{\partial\tilde{N}_p^{(M,N)}}{\partial a_{1,i}} & = & \frac{x_4^{M+N}}{y_4+a_{1,i}}\left( A(y_4)+y_4^{2M+N}x_4(...)\right)\\
\frac{\partial\tilde{N}_p^{(M,N)}}{\partial x_4} & = & (M+N)x_4^{M+N-1}A(y_4)+y_4^{2M+N}x_4^{M+N}(...)\\
\frac{\partial\tilde{N}_p^{(M,N)}}{\partial y_4} & = & x_4^{M+N}\left( A'(y_4)+y_4^{2M+N-1}x_4(...)\right) 
\end{array}
\end{equation}
Setting $\alpha_{1,i}=x_4\tilde{\alpha}_{1,i}$, we deduce from $(2)$ that 
\begin{equation}
\frac{A(y_4)}{y_4+a_{1,i}}=(M+N)\tilde{\alpha}_{1,i}(0,y_4)A(y_4)+\beta_{1,i}(0,y_4)A'(y_4)+y_4^{2M+N-1}x_4(...).
\end{equation}
Using B\'{e}zout identity, there exist polynomials $B$ and $C$ in $y_4$ such that
$$A\wedge A' = BA' + CA.$$
As before, we can choose the polynomial function $B$ to be of degree $N-1$. We denote by $D(y_4)=y_4\prod_{j=1}^{N-1}(y_4+a_{1,j})$ the polynomial function satisfying $A=(A\wedge A')D$. Therefore we obtain a solution of $(2)$ in the chart $V_4$
$$X_{1,i}^{(4)}= \frac{B(y_4)D(y_4)}{y_4+a_{1,i}}\frac{\partial}{\partial y_4}+x_4(...).$$
Similarly, in the chart $V_2$ we write
$$\tilde{N}_p^{(M,N)}=y_2^{M+N}(J(x_2)+x_2^2y_2(...))$$
with
$$J(x_2)=x_2\prod_{j=1}^{N-1}(1+a_{1,j}x_2).$$
We set $J\wedge J' = KJ' + LJ=1$. Again, we can assume that the degree of $K$ is $N-1$ and so we obtain the solution
$$X_{1,i}^{(2)}=\frac{x_2}{1+a_{1,i}x_2}K(x_2)J(x_2)\frac{\partial}{\partial x_2}+y_2(...).$$
Using the change of coordinates $x_4=x_2^2y_2$ and $y_4=1/x_2$, we find the first part of the second term of the cocycle
$$X_{1,i}^{(2,4)}=X_{1,i}^{(2)}-X_{1,i}^{(4)}=-\frac{1}{y_4+a_{1,i}}\left[ \frac{\tilde{K}(y_4)A(y_4)}{y_4^{2M+3N-3}}+B(y_4)D(y_4)\right] \frac{\partial}{\partial y_4}+x_4(...)$$
where $\tilde{K}$ is the polynomial function satisfying $K(x_2)=\frac{\tilde{K}(y_4)}{y_4^{N-1}}$.\\
Finally, we obtain the following expression of $\Phi_{1,i}^{(2,4)}$
$$\Phi_{1,i}^{(2,4)}=\frac{-1}{(M+N)(y_4+a_{1,i})}\left[ \frac{\tilde{K}(y_4)}{y_4^{2N-2}}+B(y_4)\right] +x_4(...).$$
Similarly, we find that $\Phi_{1,i}^{(2,4)}$ can be written as 
$$\Phi_{1,i}^{(2,4)}=\frac{-1}{M+N}\left[ \sum_{j=1}^{N-1}\frac{(-1)^{N+j-1}\tilde{K}(-a_{1,i})}{a_{1,i}^{N+j}}\frac{1}{y_4^{N-j-1}}+\frac{B(0)}{a_{1,i}}+\frac{R(y_4)}{y_4^{2N-2}}+y_4(...)\right] +x_4(...).$$
So, the matrix $M_1=(m_{ji})_{1\leq i,j \leq N-1}$ is given by 
$$m_{ji}=\left\lbrace \begin{array}{ll}
\frac{(-1)^{N+j}}{(M+N)a_{1,i}^{N+j}}\tilde{K}(-a_{1,i}) & \textup{ for } j\neq N-1\\
\frac{1}{M+N}\left( \frac{-1}{a_{1,i}^{2N-1}}\tilde{K}(-a_{1,i})-\frac{B(0)}{a_{1,i}}\right)  & \textup{ for } j=N-1. \end{array} \right.$$
A simple computation shows that the determinant of the matrix $M_1$ is given by 
\begin{multline*}
\textup{det}(M_1)=\frac{(-1)^{N^2-1}}{(M+N)^{N-1}}\prod_{i=1}^{N-1}\frac{\tilde{K}(-a_{1,i})}{a_{1,i}^{N+1}}\\
\left[ \prod_{1\leq i<j\leq N-1}\left( \frac{1}{a_{1,i}}-\frac{1}{a_{1,j}}\right)- B(0)\sum_{i=1}^{N-1}\frac{(-1)^{i}a_{1,i}^N}{\tilde{K}(-a_{1,i})}\mathcal{M}_{(N-1)i}\right] 
\end{multline*}
where $\mathcal{M}_{(N-1)i}=\prod_{\substack{1\leq j<j'\leq N-1\\ j,j'\neq i}}\left( \frac{1}{a_{1,j}}-\frac{1}{a_{1,j'}}\right)$ is the determinant of the matrix obtained by deleting the $(N-1)^{\textup{th}}$ row and $i^{\textup{th}}$ column of the Vandermonde $(N-1)$-matrix of $\left\lbrace \frac{-1}{a_{1,i}}\right\rbrace _{1\leq i\leq N-1}$. 
\medskip

\noindent Let us compute the term $B(0)\sum_{i=1}^{N-1}\frac{(-1)^{i}a_{1,i}^N}{\tilde{K}(-a_{1,i})}\mathcal{M}_{(N-1)i}$. In fact, we know that \\$\tilde{K}(y_4)=y_4^{N-1}K(x_2)$ with $y_4=1/x_2$. This implies that 
$$\tilde{K}(-a_{1,i})=(-a_{1,i})^{N-1}K\left( -\frac{1}{a_{1,i}}\right) .$$
But, we also know that $K\left( -\frac{1}{a_{1,i}}\right) =\frac{1}{J'\left( \frac{-1}{a_{1,i}}\right) }$.
Computing the term $J'\left(\frac{-1}{a_{1,i}} \right) $, we get the following expression
$$\tilde{K}(-a_{1,i})=\frac{(-1)^{N}a_{1,i}^{N-1}}{\prod_{\substack{j=1\\j\neq i}}^{N-1}a_{1,j}\left( \frac{1}{a_{1,j}}-\frac{1}{a_{1,i}}\right) }.$$
Moreover, one can see that the term $(-1)^i\prod_{\substack{j=1\\j\neq i}}^{N-1}\left( \frac{1}{a_{1,j}}-\frac{1}{a_{1,i}}\right) \mathcal{M}_{(N-1)i}$ is equal to \\$(-1)^{\alpha +i}\prod_{\substack{1\leq i<j\leq N-1}}\left( \frac{1}{a_{1,i}}-\frac{1}{a_{1,j}}\right) $, where $\alpha$ is equal to the number of integer numbers in the interval $[i+1,N-1]$. When $N$ is even $(-1)^{\alpha +i}$ is equal to $-1$ but when $N$ is odd it is equal to $1$. This implies that we have the following equality
$$B(0)\sum_{i=1}^{N-1}\frac{(-1)^{i}a_{1,i}^N}{\tilde{K}(-a_{1,i})}\mathcal{M}_{(N-1)i}=-(N-1)B(0)\prod_{j=1}^{N-1}a_{1,j}\prod_{1\leq i<j\leq N-1}\left( \frac{1}{a_{1,i}}-\frac{1}{a_{1,j}}\right) .$$
A simple computation using B\'{e}zout identity shows that the term $B(0)$ is given by
$$B(0)=\frac{1}{(2M+N)\prod_{j=1}^{N-1}a_{1,j}}.$$
Finally, we get the following expression of the determinant of the matrix $M_1$
$$\textup{det}(M_1)=\frac{(-1)^{N^2-1}}{(M+N)^{N-1}}\frac{2M+2N-1}{2M+N}\prod_{i=1}^{N-1}\frac{\tilde{K}(-a_{1,i})}{a_{1,i}^{N+1}}\prod_{1\leq i<j\leq N-1}\left( \frac{1}{a_{1,i}}-\frac{1}{a_{1,j}}\right) .$$
Like for $\tilde{U}$, we also have that $\tilde{K}(-a_{1,i})$ is different from zero for all $1\leq i\leq N-1$ and $a_{1,i}$ is different from $a_{1,j}$ for all $i\neq j$. This ensures that the matrix $M_1$ is invertible.
\end{proof}

\begin{lemma}
The square matrix $\mathcal{A}$ of size $\delta$, representing the decomposition of the images of $\{\frac{\partial}{\partial a_{k,i}},\frac{\partial}{\partial b_{k,i}}\}_{k,i}$ in $H^1(D,\Theta_{\mathcal{F}^{(M,N)}_{p_0}})$ by $T\tilde{\mathcal{F}}_p(p_0)$ on its basis, is an invertible matrix.
\end{lemma}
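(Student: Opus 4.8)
The plan is to deduce the invertibility of $\mathcal{A}$ from the invertibility of its first-level block $A_1$ (Lemma~2.1) by exhibiting a block-triangular structure for the whole matrix with respect to a filtration by the level index. First I would sort both the columns $\{\partial/\partial a_{k,i},\partial/\partial b_{k,i}\}$ and the monomial basis of $H^1(D,\Theta_{\mathcal{F}^{(M,N)}_{p_0}})$ --- the integer points of $\mathcal{Q}_{M,N}$ --- by the level $k$, so that the level-$1$ diagonal block is exactly $A_1$. The goal of the whole argument is to show that, in this ordering, $\mathcal{A}$ is block triangular and that each diagonal block is invertible; the determinant of $\mathcal{A}$ is then, up to sign, the product of the determinants of the diagonal blocks, and the claim follows at once.

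For the off-diagonal (triangularity) part, I would rerun the B\'ezout algorithm of Lemma~2.1 at an arbitrary level $k$. Differentiating $\tilde N_p^{(M,N)}$ with respect to $a_{k,i}$ in the chart $V_4$ brings down the monomial $x_4^{k-1}y_4^{2k-2}$ (and $x_4^{k}y_4^{k-1}$ for $b_{k,i}$) relative to the $k=1$ computation. Solving the associated homological equation via the same identities $Q\wedge Q'=WQ'+ZQ$ and $A\wedge A'=BA'+CA$, computing the cocycles $X_{k,i}^{(3,4)}$ and $X_{k,i}^{(2,4)}$, and extracting the Laurent coefficients, this extra monomial shifts the valuation of the resulting Laurent series along the direction $(1,2)$ for the $a$-parameters and $(1,1)$ for the $b$-parameters. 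Since both directions increase the linear functional measuring the level, the leading Laurent coefficients of the image of a level-$k$ derivative land on level-$k$ basis monomials, while the remaining coefficients fall on strictly lower levels only --- which is precisely the block-triangular pattern I want.

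For the diagonal blocks I would invoke the very mechanism that forced $M_3=0$ in Lemma~2.1: the B\'ezout remainder $\tilde U$ has degree $M-1$ (and $\tilde K$ degree $N-1$), so that the Laurent coefficients beyond the prescribed range vanish and each level-$k$ block splits into an $a$-part and a $b$-part. The $b$-part is a Vandermonde matrix in $\{b_{1,i}\}$ with entries proportional to $\tilde U(-1/b_{1,i})$, invertible because the $b_{1,i}$ are pairwise distinct and $\tilde U(-1/b_{1,i})\neq0$; the $a$-part has a determinant computed exactly as for $M_1$, nonzero because the $a_{1,i}$ are pairwise distinct and $\tilde K(-a_{1,i})\neq0$. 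All of these nonvanishing and distinctness conditions are guaranteed by the definition of the open set $\mathcal{P}$.

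The main obstacle is the triangularity step. One has to prove, uniformly across all levels, that the image of a level-$k$ derivative has no component on strictly higher-level basis monomials; this is a statement about the vanishing of certain Laurent coefficients, and it rests on the uniform degree bounds for the B\'ezout remainders (the analog of $\deg\tilde U=M-1$) together with a careful check that the two shift directions, $(1,2)$ for the $a$-parameters and $(1,1)$ for the $b$-parameters, are simultaneously compatible with a single linear ordering of the integer points of $\mathcal{Q}_{M,N}$ placing all nonzero off-diagonal blocks on one side of the diagonal. Pinning down this ordering and the accompanying valuation estimates is the delicate part; once it is in place, the invertibility of the diagonal blocks is a direct generalization of the level-$1$ computations already carried out.
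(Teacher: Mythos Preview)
Your plan is essentially the paper's own argument, and it is correct. Two remarks that will sharpen it.

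First, the triangularity step is much easier than you fear. Differentiating $\tilde N_p^{(M,N)}$ directly gives the \emph{exact} relations
\[
\frac{\partial \tilde N_p^{(M,N)}}{\partial a_{k,i}} = x_4^{k-1}y_4^{2k-2}\,\frac{\partial \tilde N_p^{(M,N)}}{\partial a_{1,i}},\qquad
\frac{\partial \tilde N_p^{(M,N)}}{\partial b_{k,i}} = x_4^{k-1}y_4^{k-1}\,\frac{\partial \tilde N_p^{(M,N)}}{\partial b_{1,i}},
\]
so that if $X_{1,i}$, $Y_{1,i}$ are the level-$1$ local solutions, then $X_{k,i}=x_4^{k-1}y_4^{2k-2}X_{1,i}$ and $Y_{k,i}=x_4^{k-1}y_4^{k-1}Y_{1,i}$ solve the level-$k$ equations, in every chart (the monomial factor is coordinate-invariant under the transition maps). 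Hence the level-$k$ cocycle is an exact monomial multiple of the level-$1$ cocycle, the Laurent series $\Phi_{k,i}$, $\Psi_{k,i}$ are exact shifts of $\Phi_{1,i}$, $\Psi_{1,i}$, and the block-lower-triangular structure of $\mathcal A$ falls out immediately. No rerunning of B\'ezout and no valuation estimates are needed; the ``main obstacle'' you anticipate dissolves.

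Second, for the diagonal $a$-blocks at levels $k\geq 2$ you should \emph{not} say the determinant is ``computed exactly as for $M_1$''. The level-$k$ block $M_1^k$ is the principal minor of $M_1$ obtained by deleting the last $k-1$ rows and columns. This matters: the last row of $M_1$ has the exceptional entries $\frac{1}{M+N}\bigl(-a_{1,i}^{-(2N-1)}\tilde K(-a_{1,i})-B(0)a_{1,i}^{-1}\bigr)$, some of which can vanish, and the nonvanishing of $\det M_1$ required the extra $B(0)$ computation. By contrast $M_1^k$ avoids that row entirely, so its determinant is a pure Vandermonde in $1/a_{1,1},\dots,1/a_{1,N-k}$ times $\prod_i \tilde K(-a_{1,i})$, and invertibility follows directly from distinctness of the $a_{1,i}$ and the nonvanishing of $\tilde K(-a_{1,i})$. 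The analogous remark applies to the $b$-blocks $M_4^k$ at high levels, which are principal minors of $M_4$.
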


\begin{proof}
After proving the invertibility of the matrix $A_1$, it remains to study the propagation of these coefficients along the higher levels. In fact, we have to solve the following equations
\begin{equation}
\frac{\partial\tilde{N}_p^{(M,N)}}{\partial a_{k,i}}=\alpha_{k,i}(x_4,y_4,a_{k,i},b_{k,i})\frac{\partial\tilde{N}_p^{(M,N)}}{\partial x_4}+\beta_{k,i}(x_4,y_4,a_{k,i},b_{k,i})\frac{\partial\tilde{N}_p^{(M,N)}}{\partial y_4}
\end{equation}
\begin{equation}
\frac{\partial\tilde{N}_p^{(M,N)}}{\partial b_{k,i}}=\eta_{k,i}(x_4,y_4,a_{k,i},b_{k,i})\frac{\partial\tilde{N}_p^{(M,N)}}{\partial x_4}+\gamma_{k,i}(x_4,y_4,a_{k,i},b_{k,i})\frac{\partial\tilde{N}_p^{(M,N)}}{\partial y_4}.
\end{equation}
We note that we have the following relations
\begin{equation}
\frac{\partial\tilde{N}_p^{(M,N)}}{\partial a_{k,i}}=x_4^{k-1}y_4^{2k-2}\frac{\partial\tilde{N}_p^{(M,N)}}{\partial a_{1,i}} \textup{ and } \frac{\partial\tilde{N}_p^{(M,N)}}{\partial b_{k,i}}=x_4^{k-1}y_4^{k-1}\frac{\partial\tilde{N}_p^{(M,N)}}{\partial b_{1,i}}.
\end{equation}
This implies that if $X_{k,i}=\alpha_{k,i}\frac{\partial}{\partial x_4}+\beta_{k,i}\frac{\partial}{\partial y_4}$ and $Y_{k,i}=\eta_{k,i}\frac{\partial}{\partial x_4}+\gamma_{k,i}\frac{\partial}{\partial y_4}$ are solutions of $(8)$ and $(9)$ respectively for $k=1$, then we obtain solutions for the other values of $k$ setting
$$X_{k,i}=x_4^{k-1}y_4^{2k-2}X_{1,i} \textup{ and } Y_{k,i}=x_4^{k-1}y_4^{k-1}Y_{1,i}.$$
This propagation can be described using the region $\mathcal{Q}_{M,N}$ as shown in figure $(2)$.
In fact, the decomposition of the vector fields $X_{k,i}^{(2,4)}$, $X_{k,i}^{(3,4)}$, $Y_{k,i}^{(2,4)}$ and $Y_{k,i}^{(3,4)}$ on the basis of $H^1\left( D,\Theta_{\mathcal{F}^{(M,N)}_{p_0}}\right) $ corresponds to the decomposition of the series $\Phi_{k,i}^{(2,4)}$, $\Phi_{k,i}^{(3,4)}$, $\Psi_{k,i}^{(2,4)}$ and $\Psi_{k,i}^{(3,4)}$ on the basis
$$\left\lbrace x_4^iy_4^j\mid (i,j)\in \mathbb{N}\times\mathbb{Z}\cup\mathbb{Z}\times\mathbb{N} \textup{ such that } j-2i+(N-1)>0 \textup{ and } j-i-(M-1)<0\right\rbrace .$$ 
As a consequence of the previous relations, this decomposition can be expressed by the following matrix
$$\mathcal{A}=\begin{bmatrix} 
 A_1 & 0 & 0 & \cdots & 0 \\
 * & A_2 & 0 & \cdots & 0 \\
 * & * & A_3 & \cdots & 0 \\
 \vdots & \vdots & \vdots & \ddots & \vdots \\
 * & *  & * & * & A_{N+2M-5}
 \end{bmatrix}
$$
where 
$A_1=\begin{bmatrix}
M_1 & M_2\\
M_3 & M_4
\end{bmatrix}$ 
and $A_k$ is given by 

$$\bordermatrix{ & \frac{\partial}{\partial a_{k,1}} &  \ldots & \frac{\partial}{\partial a_{k,N-k}} & & \frac{\partial}{\partial b_{k,1}} &  \ldots & \frac{\partial}{\partial b_{k,M-2}}\cr
\frac{x_4^{k-1}}{y_4^{N-2k}}   &  & M_1^k=M_1\setminus \textup{last} &  & \vline &  &  &  &   \cr
\vdots   &  & k-1 &  &  \vline & &   0 &  &   \cr
x_4^{k-1}y_4^{k-1}   &  & \textup{column and row} &  & \vline &  &  &  &   \cr
 \hline   \cr
x_4^{k-2}y_4^{k-1}   &  &  &  & \vline &  &   &  &   \cr
\vdots   &  & 0 &  & \vline &  & M_4  &  &   \cr
\frac{y_4^{k-1}}{x_4^{M-k-1}}   &  &  &  & \vline &  &  &  &   \cr
} \textup{ if } 2\leq k\leq N-1 $$

$$\bordermatrix{ & \frac{\partial}{\partial b_{k,M-1-q_{k}}} &  \ldots & \frac{\partial}{\partial b_{k,M-2}}\cr
 \frac{y_4^{k-1}}{x_4^{M-k-q_{k}}} & & M_4^k=M_4\setminus \textup{first} &  \cr
 & &  M-2-q_{k}  &    \cr
\frac{y_4^{k-1}}{x_4^{M-k-1}}  & & \textup{column and row}   &   \cr
} \hspace{0.5cm} \textup{ if } N\leq k\leq N+2M-5 $$
with $q_{k}=]\frac{k-1+(N-1)}{2}]+M-k$, where $]x]$ is the strict integer part $m$ of $x$ defined by\\ $m < x \leq m+ 1$. For $2\leq k\leq N-1$, the determinant of the matrix $M_1^k$ is given by 
$$\textup{Vandermonde}\left( \frac{1}{a_{1,1}}, ..., \frac{1}{a_{1,N-k}}\right) \frac{\prod_{i=1}^{N-k}(-1)^{N+i}\tilde{K}(-a_{1,i})}{(M+N)^{N-k}\prod_{i=1}^{N-k}a_{1,i}^{N+1}}.$$
Since $\tilde{K}(-a_{1,i})$ is different from zero for all $1\leq i\leq N-1$ and $a_{1,i}$ is different from $a_{1,j}$ for all $i\neq j$, then the matrix $M_1^k$ is invertible for all $2\leq k\leq N-1$. Similarly, for $N\leq k\leq N+2M-5$, the determinant of the matrix $M_4^k$ is given by 
$$\textup{Vandermonde}\left( \frac{1}{b_{1,M-1-q_{k}}}, ..., \frac{1}{b_{1,M-2}}\right) \frac{\prod_{i=M-1-q_{k}}^{M-2}(-1)^{i+1}b_{1,i}^{M-2+q_{k}}\tilde{U}\left(\frac{-1}{b_{1,i}}\right)}{(2M+N)^{q_k}\prod_{i=1}^{N-1}a_{1,i}^{q_k}}.$$
Also since $\tilde{U}\left(\frac{-1}{b_{1,i}}\right)$ is different from zero for all $1\leq i\leq M-2$ and $b_{1,i}$ is different from $b_{1,j}$ for all $i\neq j$, then the matrix $M_4^k$ is invertible for all $N\leq k\leq N+2M-5.$
This shows that the whole matrix $\mathcal{A}$ is invertible.
\end{proof}

\begin{remark}
The fact that the matrix $M_1^k$ is a principal minor of $M_1$ is essential for its determinant to be written under the form above. For instance, some coefficients of the last row of $M_1$ $\left(\frac{-1}{a_{1,i}^{2N-1}}\tilde{K}(-a_{1,i})-\frac{B(0)}{a_{1,i}}\right)$ may vanish.
\end{remark}

\begin{example}
For $M=N=3$, the function $f_{M,N}$ is given by 
$$f_{3,3}=\prod_{i=1}^3 \left( y+a_ix\right) \prod_{i=1}^3 \left( y+b_ix^2\right) .$$
The corresponding normal form is given by
$$N_p^{(3,3)}=xy\left( y+x^2\right) \left( y+a_{1,1}x\right) \left( y+a_{1,2}x+a_{2,2}xy\right) \left( y+b_{1,1}x^2+b_{2,1}x^3+b_{3,1}x^4+b_{4,1}x^5\right) .$$
\begin{figure}[h!]
\begin{center}
\includegraphics[width=4.5cm]{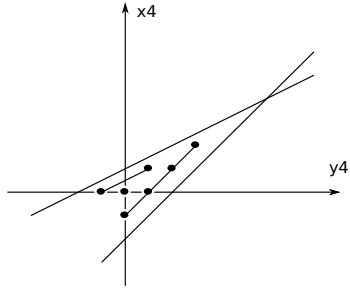}
\end{center}
\caption{The region $\mathcal{Q}^{(M,N)}$ for $M=N=3$}
\end{figure}
\end{example}

\section{The uniqueness of the normal forms.}
This section is devoted to study the uniqueness of the normal forms. From now on, we will consider $N_p$ as a notation for the normal form instead of $N_p^{(M,N)}$.\\

\noindent Let $h_{\lambda}$ be the diffeomorphism defined by: $h_{\lambda}(x,y)=(\lambda x,\lambda^2y)$. We have:
$$N_p\circ h_{\lambda}=\lambda^{2M+2N-1}N_{\lambda\cdot p} \textup{ with } \lambda\cdot p=\lambda\cdot (a_{k,i},b_{k,i})=(\lambda^{2k-3} a_{k,i},\lambda^{k-1} b_{k,i}).$$
This action of $\mathbb{C}^*$ cannot be used to "localize" the uniqueness problem as done in \cite{2} because, contrary to the quasi-homogeneous case, the topological class of the function $\frac{N_p\circ h_{\lambda}}{\lambda^{2M+2N-1}}$ jumps while $\lambda$ goes to zero. However, we are still able to prove the following:

\begin{theorem}\label{main2}
The foliations defined by $N_p$ and $N_q$, $p$ and $q$ are in $\mathcal{P}$, are equivalent if and only if there exists $\lambda$ in $\mathbb{C}^*$ such that $p=\lambda\cdot q$.
\end{theorem}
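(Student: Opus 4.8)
The plan is to prove the two implications separately, the converse being immediate and all the work going into the direct implication. Throughout, write $\mathcal{F}_p$ for the foliation defined by $dN_p=0$.

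\emph{Sufficiency.} Suppose $p=\lambda\cdot q$ for some $\lambda\in\mathbb{C}^{*}$. Taking the parameter to be $q$ in the displayed transformation rule $N_r\circ h_\lambda=\lambda^{2M+2N-1}N_{\lambda\cdot r}$ gives $N_q\circ h_\lambda=\lambda^{2M+2N-1}N_{\lambda\cdot q}=\lambda^{2M+2N-1}N_p$, whence $dN_p=\lambda^{-(2M+2N-1)}h_\lambda^{*}(dN_q)$. Thus $h_\lambda$ conjugates $\mathcal{F}_q$ to $\mathcal{F}_p$, and the foliations are equivalent. So I would concentrate entirely on the direct implication.

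\emph{Necessity, reduction to the divisor.} Let $\Phi$ be a germ of biholomorphism at $0$ conjugating $\mathcal{F}_p$ to $\mathcal{F}_q$. Since the (weighted) reduction tree is a topological invariant \cite{7}, $\Phi$ respects the reduction data and lifts to a germ $\tilde\Phi$ along $D$ preserving $D=D_1\cup D_2$ and conjugating the reduced foliations. The two components cannot be interchanged because their self-intersections differ ($D_1^2=-2$, $D_2^2=-1$; equivalently they carry $N$ and $M$ attaching separatrices), so $\tilde\Phi(D_1)=D_1$, $\tilde\Phi(D_2)=D_2$, and $\tilde\Phi$ fixes the corner $c=D_1\cap D_2$, the only singular point of the normal crossing. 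The restrictions $\phi_1=\tilde\Phi|_{D_1}$ and $\phi_2=\tilde\Phi|_{D_2}$ are then automorphisms of $\mathbb{P}^1$ fixing $c$ and carrying singular locus to singular locus. In the slope coordinate on $D_1$ in which $c$ is $a=0$ and the reference branch $\{x=0\}$ is $a=\infty$, $\phi_1$ permutes the $N$ attaching points, among which are those of slope $a_{1,i}$; in the coordinate on $D_2$ in which $c=\infty$, the branches $\{y=0\}$ and $\{y+x^2=0\}$ sit at $t=0$ and $t=-1$ and the remaining ones at $t=-b_{1,i}$. The whole statement reduces to showing that $\phi_2=\mathrm{id}$ and that $\phi_1$ is a scaling $a\mapsto\mu a$ with $\mu\in\mathbb{C}^{*}$, i.e. that $\tilde\Phi$ fixes the three reference separatrices.

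\emph{The main obstacle.} This divisor rigidity is the heart of the matter, and I expect it to be the genuinely hard step. The difficulty is that the pairwise contact orders of the branches and the Camacho--Sad indices at the attaching points are constant along each component (each simple branch meets $D_j$ with the same multiplicity ratio), so these naive invariants are blind to the reference branches and, for instance, cannot by themselves rule out the affine involution $t\mapsto-1-t$ on $D_2$ swapping $\{y=0\}$ and $\{y+x^2=0\}$, nor an affine permutation mixing the reference points with the $-b_{1,i}$. The missing constraint has to be extracted from the finer transverse analytic data, namely from the requirement that $\tilde\Phi$ intertwine the full reduced foliations and not merely their traces on $D$; concretely I would use the compatibility of $\phi_1$ and $\phi_2$ at $c$ together with the fact that the normalization pins three points on $D_2$, forcing $\phi_2=\mathrm{id}$ and hence $b_{1,i}^{(p)}=b_{1,i}^{(q)}$, while matching the first-level configurations $\{a_{1,i}^{(p)}\}$ and $\{a_{1,i}^{(q)}\}$ through the scaling determines $\mu$ and yields $\{a_{1,i}^{(p)}\}=\{\mu\,a_{1,i}^{(q)}\}$. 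These are exactly the first-level instances of $p=\lambda\cdot q$, in accordance with the weights $a_{1,i}\mapsto\lambda^{-1}a_{1,i}$, $b_{1,i}\mapsto b_{1,i}$.

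\emph{Propagation and conclusion.} It remains to reach the higher levels $k\geq2$. After composing $\Phi$ with $h_\mu$ I may assume $\tilde\Phi$ induces the identity on $D$ and fixes the reference branches, so that $\Phi$ is tangent to the identity along the separatrices. The local rigidity furnished by the universality of the unfolding (Theorem \ref{main}, via \cite{3}) then forces the remaining moduli to coincide level by level, precisely as recorded by the exponents $\lambda^{2k-3}$ and $\lambda^{k-1}$ in the action, giving $p=\lambda\cdot q$ with $\lambda=\mu$. The only delicate point, once more, is the divisor rigidity of the preceding paragraph; everything after it is bookkeeping organized by the region $\mathcal{Q}_{M,N}$ exactly as in the proof of the universality.
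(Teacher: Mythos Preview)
Your sufficiency argument is fine, and the reduction to a biholomorphism $\Phi$ tangent to the identity (after composing with $h_\lambda$) is the same first move as in the paper. But the two substantive steps of your necessity argument both have genuine gaps, and together they bypass exactly the difficulty the paper is built to overcome.

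First, the ``divisor rigidity'' you call the main obstacle is not actually established. You correctly observe that the naive invariants (contact orders, Camacho--Sad indices) do not single out the reference branches $\{y=0\}$ and $\{y+x^2=0\}$ on $D_2$, and you then assert that ``compatibility of $\phi_1$ and $\phi_2$ at $c$ together with the fact that the normalization pins three points on $D_2$'' forces $\phi_2=\mathrm{id}$. But the normalization does not pin those points for the \emph{conjugacy}: $\phi_2$ is only required to send the singular set $\{0,-1,-b_{1,1},\dots,-b_{1,M-2},\infty\}$ to the corresponding set for $q$, and compatibility at the corner just says $\phi_2(\infty)=\infty$. Nothing you have written rules out an affine permutation of the remaining $M$ points, and you acknowledge this yourself. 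This step is simply missing.

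Second, and more seriously, your ``propagation'' step invokes Theorem~\ref{main} to pass from the first level to the higher $a_{k,i},b_{k,i}$. This cannot work as stated. Universality of the unfolding is a \emph{local} statement at $p_0$: it yields uniqueness of the \emph{differential} of the classifying map, not uniqueness of the map, and it says nothing about two parameters $p,q\in\mathcal{P}$ that are not a priori close. The paper is explicit that, unlike the quasi-homogeneous case, the $\mathbb{C}^*$-action cannot be used to ``localize'' the uniqueness problem here because the topological class degenerates as $\lambda\to 0$. So invoking Theorem~\ref{main} at this point is circular with respect to the very obstacle the theorem is meant to address.

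The paper's route is entirely different and avoids both issues. After reducing to $N_p\circ\phi=N_q$ with $\phi=e^{\hat X}$ tangent to the identity at order $\nu_0+1$, one proves directly (Lemma~3.1) that the coefficients agree up to levels governed by $\nu_0$ and by the tangency order $\nu_1+1$ of the lift $\tilde\phi$ through the first blow-up. The core is then Proposition~3.1: a case analysis on whether $\phi$ and $\tilde\phi$ are \emph{dicritical}. In the dicritical cases the leading homogeneous part is radial and a short computation with $N_p^{(M+N)}$ forces the relevant coefficients (hence $p=q$); in the non-dicritical cases one bounds $\nu_0\ge N$ and $\nu_1\ge 2M+N-5$ by counting zeros of $tB_{\nu_0}(1,t)-tA_{\nu_0}(1,t)$ on the tangent cones, which via Lemma~3.1 already gives all the equalities needed. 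No appeal to universality, and no unproved divisor rigidity, is required.
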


\noindent We start by the following lemma:
\begin{lemma}
Let $X$ be a germ of formal vector field given by its decomposition into the sum of its homogeneous components $X=X_{\nu_0+1}+X_{\nu_0+2}+\ldots$. If $N_p\circ e^{X_{\nu_0+1}+\ldots}=N_q$, then for all $1\leq i\leq N-1$ and $1\leq k\leq \nu_0$ we have $a_{k,i}=a'_{k,i}$ and for all $1\leq i\leq M-2$ and $1\leq k\leq \nu_1$ we have $b_{k,i}=b'_{k,i}$, where $\nu_1+1$ is the order of tangency of $\tilde{\phi}$, the lifted biholomorphism of $\phi=e^X$ by the blowing up $E_1$ defined by $E_1(x_1,y_1)=(x_1,x_1y_1).$
\end{lemma}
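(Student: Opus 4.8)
The plan is to translate the analytic identity $N_p\circ\phi=N_q$, with $\phi=e^X$, into a statement about how $\phi$ moves the irreducible branches (the separatrices) of the curve $\{N_p=0\}$, and then to recover the coefficients $a_{k,i}$ and $b_{k,i}$ as Taylor coefficients of suitable graph parametrizations of these branches. Since $X=X_{\nu_0+1}+X_{\nu_0+2}+\cdots$ has no term of degree $\le\nu_0$, the time-one map satisfies that $\phi(x,y)-(x,y)$ vanishes to order $\nu_0+1$, and $\phi^{-1}=e^{-X}$ has the same property; in particular (assuming $\nu_0\ge1$, else the first assertion is vacuous) $\phi$ is tangent to the identity and preserves the tangent direction of every smooth branch through the origin. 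From $\{N_q=0\}=\phi^{-1}\{N_p=0\}$ and unique factorization one gets a bijection between the branches of $N_p$ and those of $N_q$; because $p,q\in\mathcal{P}$ the slopes $\{-a_{1,i}\}$ and the first-level data $\{-b_{1,i}\}$ are pairwise distinct (and distinct from the slopes of the fixed branches $y=0$ and $y+x^2$), so tangency of $\phi$ to the identity makes this bijection the index-preserving one.

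For the $a$-coefficients I would work directly in $(x,y)$. The $i$-th $a$-branch of $N_p$ is the smooth curve $y+\sum_{k=1}^{i}a_{k,i}xy^{k-1}=0$, which I write as a graph $y=c^{(p)}_i(x)=\sum_{k\ge1}c_{k,i}x^k$. Solving the defining equation recursively shows that for each $m\le i$ the coefficients $c_{1,i},\dots,c_{m,i}$ determine and are determined by $a_{1,i},\dots,a_{m,i}$, the relation being triangular with invertible diagonal because $a_{1,i}\neq0$. The matching branch of $N_q$ is parametrized by $\phi^{-1}\circ(t,c^{(p)}_i(t))$; since $\phi^{-1}-\mathrm{id}$ vanishes to order $\nu_0+1$ and $y=O(x)$ along the branch, reparametrizing by the first coordinate gives a graph $y=c^{(q)}_i(x)$ with $c^{(q)}_i(x)-c^{(p)}_i(x)$ vanishing to order $\nu_0+1$. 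Matching Taylor coefficients up to order $\nu_0$ and inverting the triangular relation yields $a_{k,i}=a'_{k,i}$ for all $1\le k\le\nu_0$ (the coefficient $a_{k,i}$ existing only for $k\le i$).

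For the $b$-coefficients the same displacement estimate in $(x,y)$ is too weak, since along a $b$-branch $y=O(x^2)$; this is exactly where the blow-up enters. I would lift everything by $E_1(x_1,y_1)=(x_1,x_1y_1)$: by functoriality $\tilde N_p\circ\tilde\phi=\tilde N_q$ with $\tilde N_p=N_p\circ E_1$ and $\tilde\phi=E_1^{-1}\phi E_1$, and by hypothesis $\tilde\phi-\mathrm{id}$ vanishes to order $\nu_1+1$ in the chart $(x_1,y_1)$. In this chart the $i$-th $b$-branch becomes the graph $y_1=-\sum_{k}b_{k,i}x_1^{k}$, whose tangent slopes $-b_{1,i}$ are now pairwise distinct, so the branches are again matched index by index, and $b_{k,i}$ is literally the $k$-th Taylor coefficient of this graph. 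Repeating the displacement argument of the previous paragraph for $\tilde\phi$ (whose inverse is again tangent to the identity to order $\nu_1$) shows that the two graphs agree to order $x_1^{\nu_1}$, which gives $b_{k,i}=b'_{k,i}$ for all $1\le k\le\nu_1$.

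The main obstacle is the passage through the blow-up in the third paragraph: I must check that $\phi$, being tangent to the identity, lifts to a germ $\tilde\phi$ fixing the exceptional divisor and that its order of tangency there is precisely the quantity $\nu_1$ named in the statement, so that the gain over the naive bound is genuine, and that the functorial identity $\tilde N_p\circ\tilde\phi=\tilde N_q$ survives with the branch labelling intact. The remaining points—the triangular invertibility of $(a_{k,i})\mapsto(c_{k,i})$ and the fact that the distinctness conditions built into $\mathcal{P}$ make the branch matching index-preserving—are routine once the separatrix picture is in place.
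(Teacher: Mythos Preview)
Your argument is correct, but it follows a genuinely different route from the paper's proof. The paper never looks at individual branches: it compares the homogeneous decompositions $N_p=\sum_{l\ge0}N_p^{(M+N+l)}$ and $N_q=\sum_{l\ge0}N_q^{(M+N+l)}$, observes that $(e^X)^*N_p=N_p+X_{\nu_0+1}\cdot N_p+\cdots$ forces $N_p^{(M+N+l)}=N_q^{(M+N+l)}$ for $l\le\nu_0-1$, and then shows by induction on $l$ that equality of these components pins down $a_{k,i}$ and $b_{k,i}$ level by level, using that $N_p^{(M+N+l)}$ depends only on $a_{k,i}$ with $k\le l+1$ and $b_{k,i}$ with $k\le l$. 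The same pattern is repeated after the blow-up with $\tilde N_p^{(M+1+l)}$ to reach the $b$-coefficients up to $k\le\nu_1$. Your approach instead reads off each $a_{k,i}$ (resp.\ $b_{k,i}$) from the $k$-th jet of the graph of a single separatrix, and transports the displacement bound $\phi-\mathrm{id}\in\mathfrak m^{\nu_0+1}$ directly along that graph. This is more geometric and arguably cleaner: the triangular inversion $(c_{1,i},\dots,c_{m,i})\leftrightarrow(a_{1,i},\dots,a_{m,i})$ is elementary (the diagonal entry being $(-a_{1,i})^{m-1}\neq0$), and after the blow-up the $b_{k,i}$ are literally Taylor coefficients, so no inversion is needed at all. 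The paper's method, on the other hand, never isolates a branch and works entirely with the product; this is closer in spirit to the cohomological computations elsewhere in the paper but makes the induction bookkeeping (the auxiliary terms $H_{a,b}$, $\tilde H_{a,b}$ and the cross terms $a_{k_1,i}b_{k_2,j}$) somewhat heavier. One small remark on your write-up: the ``main obstacle'' you flag---that the lift $\tilde\phi$ exists and has tangency order exactly $\nu_1+1$---is not an obstacle here, since $\nu_1+1$ is \emph{defined} in the statement as the tangency order of $\tilde\phi$; the only thing to note is the standard fact that a biholomorphism tangent to the identity lifts through a point blow-up.
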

\begin{proof}
We consider the decomposition of the normal form into its homogeneous components:
$$N_p=N_p^{(M+N)}+N_p^{(M+N+1)}+\ldots$$
Since we have 
$$\left(e^{{X_{\nu_0+1}+\ldots}}\right)^*N_p=N_p+X_{\nu_0+1}.N_p+\ldots ,$$  
we obtain that $N_p^{(M+N+l)}=N_q^{(M+N+l)}$ for $l$ from 0 to $\nu_0-1$. The expression of $N_p^{(M+N+l)}$ only depends on the variables $a_{k,i}$ for $k\leq l+1$ and $b_{k,i}$ for $k\leq l$. 
Setting $\tilde{\phi}=e^{\tilde{X}_{\nu_1+1}+\ldots}$, the initial hypothesis leads to the following equality
$$\tilde{N}_p\circ e^{\tilde{X}_{\nu_1+1}+\ldots}=\tilde{N}_q,$$
where
$$\tilde{N}_p\left( x_1,y_1\right) =x_1y_1\left( y_1+x_1\right) \prod_{i=1}^{N-1}\left( y_1+\sum_{k=1}^{i}a_{k,i}x_1^{k-1}y_1^{k-1}\right) \prod_{i=1}^{M-2}\left( y_1+\sum_{k=1}^{N-1+2i}b_{k,i}x_1^{k}\right) .$$
Similarly we obtain $\tilde{N}_p^{(M+1+l)}=\tilde{N}_q^{(M+1+l)}$ for $l$ from 0 to $\nu_1-1$. The expression of $\tilde{N}_p^{(M+1+l)}$ only depends on the variables $a_{k,i}$ for $k\leq l$ (except for $l=0$ as $\tilde{N}_p^{(M+1)}$ depends on $a_{1,i}$) and $b_{k,i}$ for $k\leq l+1$. Now, we claim that for all $l$ from 0 to $\nu_0-1$, 
$$N_p^{(M+N+l)}=N_q^{(M+N+l)} \textup{ and } \tilde{N}_p^{(M+1+l)}=\tilde{N}_q^{(M+1+l)} \Leftrightarrow a_{k,i}=a'_{k,i} \textup{ and } b_{k,i}=b'_{k,i} \forall k\leq l+1.$$ 
This fact can be proved by induction on $l\leq \nu_0-1$. For $l=0$, we have the following two equalities 
$$N_p^{(M+N)}=N_q^{(M+N)} \textup{ and }  \tilde{N}_p^{(M+1)}=\tilde{N}_q^{(M+1)}.$$
Since the conjugacy preserves a fixed numbering of the branches, we obtain that $a_{1,i}=a'_{1,i}$ and $b_{1,i}=b'_{1,i}$.
Suppose that $a_{k,i}=a'_{k,i}$ and $b_{k,i}=b'_{k,i}$ for $l<\nu_0-1$. Then we have $N_p^{(M+N+l)}=N_q^{(M+N+l)}$ with 
$$N_p^{(M+N+l)}=\sum_{i=1}^{N-1}a_{l+1,i}xy^l\frac{N_p^{(M+N)}}{y+a_{1,i}x}+\sum_{i=1}^{M-2}b_{l,i}x^{l+1}\frac{N_p^{(M+N)}}{y}+H_{a,b}(x,y),$$
where $H_{a,b}$ is a function which depends on $a_{k,i}$ for $k<l+1$ and $b_{k,i}$ for $k<l$. This implies that $a_{l+1,i}=a'_{l+1,i}$. Similarly, we have $\tilde{N}_p^{(M+1+l)}=\tilde{N}_q^{(M+1+l)}$ with
$$\tilde{N}_p^{(M+1+l)}=\sum_{i=1}^{N-1}a_{\frac{l}{2}+1,i}x_1^{\frac{l}{2}}y_1^{\frac{l}{2}}\frac{\tilde{N}_p^{(M+1)}}{a_{1,i}}+\sum_{i=1}^{M-2}b_{l+1,i}x_1^{l+1}\frac{\tilde{N}_p^{(M+1)}}{y_1+b_{1,i}x_1}+\tilde{H}_{a,b}(x_1,y_1)$$
where the first term exists only if $l$ is even and greater than or equal to two and $\tilde{H}_{a,b}(x_1,y_1)$ is a function which depends on $a_{k,i}$ for $k<l$ and $b_{k,i}$ for $k<l+1$. This implies that $b_{l+1,i}=b'_{l+1,i}$.\\
Now, we know that $\nu_0\leq \nu_1$. So we claim that for all $\nu_0\leq l\leq \nu_1-1$, 
$$\tilde{N}_p^{(M+1+l)}=\tilde{N}_q^{(M+1+l)} \Longleftrightarrow b_{k,i}=b'_{k,i} \hspace{0.5cm}\forall k\leq l+1.$$
For $l=\nu_0$, we know that $a_{\nu_0,i}=a'_{\nu_0,i}$ and $b_{\nu_0,i}=b'_{\nu_0,i}$. Similarly we obtain that $b_{\nu_0+1,i}=b'_{\nu_0+1,i}$. 
Suppose that $b_{k,i}=b'_{k,i}$ for $l<\nu_1-1$. Then we have $\tilde{N}_p^{(M+1+l)}=\tilde{N}_p^{(M+l+l)}$ where 
\[\begin{array}{rl}
\tilde{N}_p^{(M+1+l)}= & \displaystyle\sum_{\substack{i=1}}^{N-1}a_{\frac{l}{2}+1,i}x_1^{\frac{l}{2}}y_1^{\frac{l}{2}}\frac{\tilde{N}_p^{(M+1)}}{a_{1,i}}+\displaystyle\sum_{\substack{i=1}}^{M-2}b_{l+1,i}x_1^{l+1}\frac{\tilde{N}_p^{(M+1)}}{y_1+b_{1,i}x_1}+\\
 & \displaystyle\sum_{\substack{i=1}}^{N-1}\displaystyle\sum_{\substack{i=1}}^{M-2}\displaystyle\sum_{\substack{2k_1+k_2=l+3\\k_1,k_2\neq 1}}a_{k_1,i}b_{k_2,j}x_1^{k_1+k_2-1}y_1^{k_1-1}\frac{\tilde{N}_p^{M+1}}{a_{1,i}(y_1+b_{1,j}x_1)}+\tilde{H}_{a,b}(x_1,y_1).
\end{array}\]
To show that $b_{l+1,i}=b'_{l+1,i}$, it is enough to show that $k_1<\nu_0+1$. In fact, by definition we have $k_1=\frac{l+3-k_2}{2}$. So, using that $l\leq \nu_1-1$, $k_2>1$ and that $\nu_1\leq 2\nu_0$, we conclude that $k_1<\nu_0+1$.
\end{proof}
\noindent A process of blowing-up $E$ is said to be a \textit{chain process} if, either $E$ is the standard blowing-up of the origin of $\mathbb{C}^2$, or $E=E'\circ E''$ where $E'$ is a chain process and $E''$ is the standard blowing-up of the of a point that belongs to the smooth part of the highest irreducible component of $E'$. The length of a chain process of blowing-up is the total number of blowing-up and the height of an irreducible component $D$ of the exceptional divisor of $E$ is the minimal number of blown-up points so that $D$ appears. A chain process of blowing-up admits \textit{privileged systems of coordinates} $(x,y)$ in a neighborhood of the component of maximal height such that $E$ is written
$$E:(x,t)\longmapsto (x,tx^h+t_{h-1}x^{h-1}+t_{h-2}x^{h-2}+\ldots +t_1x).$$
The values $t_i$ are the positions of the successive centers in the successive privileged coordinates and $x=0$ is a local equation of the divisor.\\
\noindent Let $\phi$ be a germ of biholomorphism tangent to the identity map at order $\nu_0+1\geq 2$ and fixing the curves $\{x=0\}$ and $\{y=0\}$. The function $\phi$ is written 
\begin{equation}
(x,y)\longmapsto \big(x(1+A_{\nu_0}(x,y)+\ldots),y(1+B_{\nu_0}(x,y)+\ldots)\big)
\end{equation}
where $A_{\nu_0}$ and $B_{\nu_0}$ are homogeneous polynomials of degree $\nu_0$. The following lemma can be proved by induction on the height of the component:

\begin{lemma}
The biholomorphism $\phi$ can be lifted-up through any chain process $E$ of blowing-up with length smaller $\nu_0+1$: there exists $\tilde{\phi}$ such that $E\circ\tilde{\phi}=\phi\circ E$. The action of $\tilde{\phi}$ on any component of the divisor of height less than $\nu_0$ is trivial. Its action on any component of height $\nu_0+1$ is written in privileged coordinates 
$$(0,t)\longmapsto \big(0,t+t_1B_{\nu_0}(1,t_1)-t_1A_{\nu_0}(1,t_1)\big)$$
where $t_1$ is the coordinate of the blown-up point on the first component of the irreducible divisor.
\end{lemma}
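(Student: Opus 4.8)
The plan is to construct the lift $\tilde\phi$ one blowing-up at a time, relying at each step on the functoriality of the blowing-up: a germ of biholomorphism fixing a point lifts uniquely through the blow-up of that point. The whole argument is then an induction on the height $h$ of the component, with inductive hypothesis: \emph{for every $h\le\nu_0$ the lift exists up to height $h$ and acts trivially on the component $D_h$ of height $h$ (hence on all components of smaller height)}. Granting this, each successive centre $p_h\in D_h$ — the point one blows up to create $D_{h+1}$ — is fixed by $\tilde\phi$, so the next lift exists; iterating up to $h=\nu_0$ shows the lift is defined through any chain of length at most $\nu_0+1$, which proves the first assertion. The triviality for $h\le\nu_0$ is exactly the statement that $\tilde\phi$ fixes every component of height less than $\nu_0$.

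The core is a single computation, carried out once in the privileged coordinates $(x,t)$ in which $E(x,t)=\big(x,\sum_{i=1}^{h}t_i x^{i}\big)$, where $t_1,\dots,t_{h-1}$ are the fixed positions of the earlier centres and $t_h=t$ is the free coordinate on $D_h$. Writing $\tilde\phi(x,t)=(X,T)$ and imposing $E\circ\tilde\phi=\phi\circ E$ gives the two scalar equations $X=x\big(1+A_{\nu_0}(x,Y)+\cdots\big)$ and $\sum_{i=1}^{h-1}t_i X^{i}+T X^{h}=Y\big(1+B_{\nu_0}(x,Y)+\cdots\big)$, with $Y=\sum_{i=1}^{h}t_i x^{i}$. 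Since $A_{\nu_0},B_{\nu_0}$ are homogeneous of degree $\nu_0$ and $Y/x=t_1+O(x)$, one has $A_{\nu_0}(x,Y)=x^{\nu_0}\big(A_{\nu_0}(1,t_1)+O(x)\big)$ and likewise for $B_{\nu_0}$, so that $X=x+A_{\nu_0}(1,t_1)x^{\nu_0+1}+\cdots$ and $X^{i}=x^{i}+iA_{\nu_0}(1,t_1)x^{i+\nu_0}+\cdots$. I would then read off the coefficient of $x^{h}$ in the second equation. The lower-degree terms $\sum_{i<h}t_i x^i$ cancel against the corresponding terms of $Y$; the only correction to the leading term $t\,x^{h}$ on the right is $t_1 B_{\nu_0}(1,t_1)x^{\nu_0+1}$, while on the left the only correction to $T x^{h}$ is the $i=1$ contribution $t_1 A_{\nu_0}(1,t_1)x^{1+\nu_0}$.

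Comparing coefficients of $x^{h}$ then splits into two cases. If $h\le\nu_0$ then $1+\nu_0>h$, so neither correction reaches degree $h$ and the equation reduces to $T=t$: the action on $D_h$ is trivial, which closes the induction. If $h=\nu_0+1$ then both corrections land precisely in degree $h$, and matching gives $T+t_1 A_{\nu_0}(1,t_1)=t+t_1 B_{\nu_0}(1,t_1)$; restricting to the divisor $x=0$ yields the asserted action $(0,t)\mapsto\big(0,\,t+t_1 B_{\nu_0}(1,t_1)-t_1 A_{\nu_0}(1,t_1)\big)$.

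The point demanding the most care is to be sure nothing of equal or lower order is overlooked. I must check that the higher-order tails of $\phi$ (the multipliers $A_j,B_j$ with $j>\nu_0$) contribute only to $x^{i+j}$ with $i+j\ge\nu_0+2$, hence never to the coefficient of $x^{\nu_0+1}$, and that the unknown correction to $T$ itself is of strictly higher order in $x$, so it does not feed back into the leading coefficient. One must also record that the holomorphy of $\tilde\phi$ — that is, the divisibility of the right-hand side of the second equation by $X^{h}$ — is guaranteed a priori by the functoriality invoked for existence, so the coefficient extraction is legitimate. These are the only genuinely delicate verifications; the rest is the routine bookkeeping of the expansions above. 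I note in passing that the computation in fact gives triviality for every height up to $\nu_0$, which contains the stated conclusion for heights less than $\nu_0$.
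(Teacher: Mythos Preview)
Your proposal is correct and follows exactly the approach the paper indicates: the paper does not give a detailed proof of this lemma but merely states that it ``can be proved by induction on the height of the component,'' which is precisely the induction you carry out, with the explicit coefficient comparison in privileged coordinates supplying the formula on the top component. Your bookkeeping (order of the corrections in $x$, the fact that the tails $A_j,B_j$ with $j>\nu_0$ contribute only beyond degree $\nu_0+1$, and the functoriality guaranteeing holomorphy of $\tilde\phi$) is sound.
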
 

\begin{definition}
A germ of biholomorphism $\phi$ is said is said to be \textit{dicritical} if $\phi$ written 
$$(x,y)\longmapsto \big(x+A_{\nu}(x,y)+\ldots,y+B_{\nu}(x,y)+\ldots\big),$$
$xB_{\nu}(x,y)-yA_{\nu}(x,y)$ vanishes.
\end{definition}

\noindent We can now prove the main Theorem \ref{main2} of this section.\\

\noindent\textit{Proof of Theorem \ref{main2}.}
Suppose that there exists a conjugacy relation 
\begin{equation}
N_p\circ\phi=\psi\circ N_q.
\end{equation}
Following \cite{4}, we can suppose that $\psi$ is a homothety $\gamma$Id. The biholomorphism $\phi$ can be supposed tangent to the identity. In fact, since $\phi$ lets the curves $\{x=0\}$, $\{y=0\}$ and $\{y+x^2=0\}$ invariant, then it can be written
$$(x,y)\longmapsto \left(\lambda x(1+A_{\nu_0}(x,y)+\ldots),\lambda^2 y(1+B_{\nu_0}(x,y)+\ldots)\right),$$
for some $\lambda\neq 0$. Then
$$N_p\circ\phi\circ h_{\lambda}^{-1}=\gamma N_q\circ h_{\lambda}^{-1}=cN_{\lambda^{-1}\cdot q},$$
where $c$ stands for some non vanishing number. Since $\phi\circ h_{\lambda}^{-1}$ is tangent to the identity, we find that $c=1$. Thus, setting for the sake of simplicity $q=\lambda^{-1}\cdot q$ and $\phi=\phi\circ h_{\lambda}^{-1}$, we are led to the relation
$$ N_p\circ\phi=N_q,$$
where $\phi$ can be written under the form $(11)$.

\noindent The proof reduces to show that in this situation, we have $p=q$. Using Lemma (3.1), we know that for all $1\leq i\leq N-1$ and $1\leq k\leq \nu_0$ we have $a_{k,i}=a'_{k,i}$ and for all $1\leq i\leq M-2$ and $1\leq k\leq \nu_1$ we have $b_{k,i}=b'_{k,i}$. This means that, based on the structure of the normal form, to show that for any $k\leq N-1$, $a_{k,i}=a'_{k,i}$, it is enough to show that $\nu_0\geq N-1$. In the same way, to show that for any $k\leq 2M-N-5$, $b_{k,i}=b'_{k,i}$, it is enough to show that $\nu_1\geq N+2M-5$. Thus, the proof results from the following proposition:

\begin{proposition}
If $N_p\circ \phi=N_q$, then the following assertions hold:
\begin{enumerate}
\item If $\phi$ is dicritical then $p=q$.
\item If $\phi$ is non-dicritical then $\nu_0\geq N$.
\item If $\phi$ and $\tilde{\phi}$ are non-dicritical then $\nu_1\geq 2M+N-5$.
\item If $\tilde{\phi}$ is dicritical then $p=q$.
\end{enumerate}
\end{proposition}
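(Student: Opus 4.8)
The plan is to analyse the two leading jets attached to the conjugacy: the order of tangency $\nu_0+1$ of $\phi$ itself and the order $\nu_1+1$ of its lift $\tilde{\phi}$ through $E_1$, using the explicit description of the induced action on the exceptional divisor provided by Lemma 3.2. I would write the leading generator of $\phi$ as $\xi=xA_{\nu_0}\frac{\partial}{\partial x}+yB_{\nu_0}\frac{\partial}{\partial y}$, so that $\phi=\exp(\xi)+\cdots$; the dicritical defect is then the homogeneous polynomial $xB_{\nu_0+1}-yA_{\nu_0+1}=xy\,(B_{\nu_0}-A_{\nu_0})$, and $\phi$ is dicritical exactly when $A_{\nu_0}=B_{\nu_0}$, i.e.\ when this defect vanishes identically. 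The four assertions split along this dichotomy: the two dicritical cases should give $p=q$ directly, while the two non-dicritical cases should produce the numerical lower bounds on $\nu_0$ and $\nu_1$ which, combined with Lemma 3.1, force all the $a_{k,i}$ (resp.\ all the $b_{k,i}$) to coincide.

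For the non-dicritical assertion (2) the idea is that the conjugacy forces $B_{\nu_0}-A_{\nu_0}$ to vanish in the direction of every separatrix of $N_p$ meeting the first component $D_1$. Since $\phi$ is tangent to the identity to order $\nu_0+1$, its image of a branch through the origin agrees with that branch to order $\nu_0$; together with Lemma 3.1 this shows the branches of $N_p$ and $N_q$ reaching $D_1$ coincide, so $\xi$ must be tangent to each of them, whence $(B_{\nu_0}-A_{\nu_0})(1,-a_{1,i})=0$ for every admissible $i$. One reads these directions off the covering: the $N-1$ transverse linear separatrices sit at $x_2=-1/a_{1,i}$, the branch $\{x=0\}$ at $x_2=0$, and the distinguished branch $\{y+x^2=0\}$ together with $\{y=0\}$ accounts for the corner $D_1\cap D_2$. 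Counting these vanishing conditions against the degree $\nu_0$ of $B_{\nu_0}-A_{\nu_0}$ forces, for a nonzero defect, $\nu_0\ge N$. Assertion (3) is the same analysis carried out one blow-up higher, for $\tilde{\phi}$ on the component $D_2$: the relevant separatrices are now the quadratic branches, whose coefficients $b_{k,i}$ run up to $k=N-1+2(M-2)=N+2M-5$, so the corresponding count of directions on $D_2$ yields $\nu_1\ge 2M+N-5$.

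For the dicritical assertions (1) and (4) I would use that a vanishing defect makes $\xi$ radial, $\xi=A_{\nu_0}\,(x\frac{\partial}{\partial x}+y\frac{\partial}{\partial y})$. Such a leading term is an infinitesimal homothety, i.e.\ it points in the direction of the $\mathbb{C}^*$-action $h_{\lambda}$; but in the reduction preceding the theorem we already normalised $\psi$ to the trivial homothety and $\phi$ to be tangent to the identity, so this direction has been factored out and the translation induced on every height-$(\nu_0+1)$ component in Lemma 3.2 is identically zero. Consequently $\tilde{\phi}$ fixes each infinitely near point of the resolution, every separatrix position of $N_q$ is carried to the corresponding one of $N_p$ without shift, and $p=q$; assertion (4) is the same statement read after the first blow-up, for $\tilde{\phi}$ and the $b$-parameters.

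The main obstacle is the exact bookkeeping in the non-dicritical cases: one must check that the vanishing conditions imposed on $B_{\nu_0}-A_{\nu_0}$ (resp.\ on its lift along $D_2$) are independent and number precisely $N$ (resp.\ $2M+N-5$), which requires careful treatment of the rigid branches $\{x=0\}$, $\{y=0\}$ and of the distinguished branch $\{y+x^2=0\}$ at the corner, as well as of the doubling of levels induced by the blow-up $E_1$ (reflected in the relation $\nu_0\le\nu_1\le 2\nu_0$ already used in Lemma 3.1). The delicate point is that a branch only becomes rigid, and hence only imposes a vanishing condition, once the lower-level parameters have been matched; so the count must be organised as an induction on the level, exactly along the region $\mathcal{Q}_{M,N}$ that governed the dimension computation in Proposition 1.1.
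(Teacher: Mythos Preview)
Your treatment of assertion~(2) matches the paper's: the leading defect $t(B_{\nu_0}(1,t)-A_{\nu_0}(1,t))$ must vanish at the directions of the separatrices meeting $D_1$, and the degree count forces $\nu_0\ge N$ when the defect is nonzero.

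There are, however, two genuine gaps.

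\emph{Dicritical cases (1) and (4).} Your argument that ``this direction has been factored out'' is incorrect. The normalisation preceding the theorem removes only the one-parameter action $h_\lambda$, i.e.\ the flow of the \emph{single} vector field $x\partial_x+2y\partial_y$. A dicritical leading term is $A_{\nu_0}(x,y)\cdot R$ for an arbitrary homogeneous polynomial $A_{\nu_0}$ of degree $\nu_0$; these are not absorbed by factoring out $h_\lambda$. The paper proceeds differently: writing the first nontrivial homogeneous relation $N_p^{(\nu_0+M+N)}-N_q^{(\nu_0+M+N)}+(M+N)\,\phi_{\nu_0}N_p^{(M+N)}=0$, it observes that the difference on the left is $N_p^{(M+N)}\sum_i\lambda_i\,xy^{\nu_0}/(y+a_{1,i}x)$, so $\phi_{\nu_0}$ would have to equal a sum of genuine partial fractions---which is a polynomial only when all $\lambda_i=0$, forcing $\phi_{\nu_0}\equiv 0$ and contradicting the definition of $\nu_0$. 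Your ``translation is identically zero'' only reproduces Lemma~3.1 (coincidence of the first $\nu_0$ levels of parameters) and does not conclude $p=q$.

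\emph{Bound in (3).} The number $2M+N-5$ is \emph{not} the number of separatrix directions on $D_2$; that count is only $M$ (the points $0,-1,-b_{1,1},\ldots,-b_{1,M-2}$ plus the corner). A direct repetition of the argument for~(2) on $D_2$ would yield at best $\nu_1\ge M$, which is far too weak. The paper's proof is a genuine bootstrap between the two levels: one writes $\nu_1=\nu_0+\alpha_0$ where $\alpha_0$ is determined by the smallest $j$ with $\alpha_{i,j}\ne 0$ or $\beta_{i,j}\ne 0$; non-dicriticalness of $\tilde\phi$ on $D_2$ forces $\alpha_0$ to be large, which pushes $j_0$ up, which in turn (via the vanishing of $f(t)$ at the $N+1$ directions on $D_1$) forces $\nu_0$ to be larger, and one iterates. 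This back-and-forth is what produces $2M+N-5$; your outline does not contain it, and the final ``main obstacle'' paragraph, while gesturing at an induction, still locates the count on $D_2$ rather than in the interplay between the two levels.
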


\begin{proof}
\begin{enumerate}

\item If $\nu_0\geq 2M+N-5$ then $\nu_1\geq 2M+N-5$ and $\nu_0\geq N-1$. So, by Lemma (3.1), we have $p=q$. Suppose that $\nu_0<2M+N-5$. Since $\phi$ is tangent to the identity, then it is the time one of the flow of a formal dicritical vector field
$$\phi=e^{\hat{X}}.$$
Its homogeneous part of degree $\nu_0+1$ is radial and is written $\phi_{\nu_0}R$ where $\phi_{\nu_0}$ stands for a homogeneous polynomial function of degree $\nu_0$ and $R$ for the radial vector field $x\partial_{x}+y\partial_{y}$. The initial hypothesis can be expressed as follows
$$\left(e^{\hat{X}}\right)^*N_p=N_p+\phi_{\nu_0}R.N_p+\ldots=N_q.$$
In this relation, the valuation of $\phi_{\nu_0}R.N_p$ is at least $\nu_0+M+N$. Lemma (3.1) implies that the first non-trivial homogeneous part of the previous relation is of valuation $\nu_0+M+N$ and it is written 

$$N_p^{(\nu_0+M+N)}+\phi_{\nu_0}R.N_p^{(M+N)}=N_q^{(\nu_0+M+N)}.$$
Since $N_p^{(M+N)}$ is homogeneous, then this relation becomes 
$$N_p^{(\nu_0+M+N)}-N_q^{(\nu_0+M+N)}+(M+N)\phi_{\nu_0}N_p^{(M+N)}=0.$$
The homogeneous component of degree $\nu_0+M+N$ in $N_p$ is written 
\begin{itemize}
\item If $\nu_0+1\leq N-1$, then 
$$N_p^{(\nu_0+M+N)}= \displaystyle\sum_{i=1}^{N-1}a_{\nu_0+1,i}xy^{\nu_0}\frac{N_p^{(M+N)}}{y+a_{1,i}x}+\displaystyle\sum_{i=1}^{M-2}b_{\nu_0,i}x^{\nu_0+1}\frac{N_p^{(M+N)}}{y}+ H_{a,b}(x,y)$$
where $H_{a,b}$ is a function which depends on $a_{k,i}$ for $k<\nu_0+1$ and $b_{k,i}$ for $k<\nu_0$. Since $a_{1,i}=a'_{1,i}$ and $b_{\nu_0,i}=b'_{\nu_0,i}$, then the difference $N_p^{(\nu_0+M+N)}-N_q^{(\nu_0+M+N)}$ is written 
$$N_p^{(M+N)}\left(\sum_{i=1}^{N-1}\frac{\lambda_ixy^{\nu_0}}{y+a_{1,i}x}\right),$$
where $\lambda_i=a_{\nu_0+1,i}-a'_{\nu_0+1,i}$. Therefore, the polynomial function $\phi_{\nu_0}$ must coincide with 
$$-\frac{1}{M+N}\sum_{i=1}^{N-1}\frac{\lambda_ixy^{\nu_0}}{y+a_{1,i}x}$$
which happens to be polynomial if and only if $\lambda_i$ vanishes for all $i$ and therefore $\phi_{\nu_0}$ must be the zero polynomial.
\item If $\nu_0+1> N-1$, then 
$$N_p^{(\nu_0+M+N)}=\displaystyle\sum_{i=1}^{M-2}b_{\nu_0,i}x^{\nu_0+1}\frac{N_p^{(M+N)}}{y}+ H_{a,b}(x,y)$$
where $H_{a,b}$ is a function which depends on $b_{k,i}$ for $k<\nu_0$. Since $b_{\nu_0,i}=b'_{\nu_0,i}$, then the difference $N_p^{(\nu_0+M+N)}-N_p^{(\nu_0+M+N)}$ is zero. As a consequence $\phi_{\nu_0}$ must be the zero polynomial.
\end{itemize}
\item We suppose that $\nu_0<N$. We know that $\phi$ can be written as follows 
$$(x,y)\longmapsto \left(x(1+A_{\nu_0}(x,y)+\ldots),y(1+B_{\nu_0}(x,y)+\ldots)\right).$$
Since the action of $\phi$ on any component of height $\nu_0+1$ conjugates the complete cones, then the function $tB_{\nu_0}(1,t)-tA_{\nu_0}(1,t)$ vanishes on $\{0,\infty,a_{1,1},\ldots, a_{1,\nu_0}\}$, which is the common tangent cone of $N_p$ and $N_q$. Since the degree of $tB_{\nu_0}(1,t)-tA_{\nu_0}(1,t)$ is at most $\nu_0+1$, then it is the zero polynomial. Hence,
$$xyB_{\nu_0}(x,y)-xyA_{\nu_0}(x,y)=0,$$
which is impossible since $\phi$ is non-dicritical.
\item Suppose that $\nu_1<2M+N-5$. The functions $A_{\nu_0}$ and $B_{\nu_0}$ are homogeneous of degree $\nu_0$. So, we write them as
$$A_{\nu_0}(x,y)=\sum_{i+j=\nu_0}\alpha_{i,j}x^iy^j \textup{ and } B_{\nu_0}(x,y)=\sum_{i+j=\nu_0}\beta_{i,j}x^iy^j.$$
Since $\nu_0\geq N$, then the function $f(t)$, defined by 
$$f(t)=tB_{\nu_0}(1,t)-tA_{\nu_0}(1,t)=t\sum_{i+j=\nu_0}(\beta_{i,j}-\alpha_{i,j})t^j,$$
vanishes at $\{0,\infty,a_{1,1},\ldots, a_{1,N-1}\}$.
The biholomorphism $\tilde{\phi}$ is given by $\tilde{\phi}=E_1^{-1}\circ\phi\circ E_1$. So, it can be written as
$$\tilde{\phi}(x_1,y_1)=\left(x_1(1+A(x_1, x_1y_1)),y_1(1+B(x_1,x_1y_1)-A(x_1,x_1y_1)+\ldots)\right),$$
where the lifted homogeneous parts of degree $\nu_0$ of $A$ and $B$ has the form 
$$A_{\nu_0}(x_1,x_1y_1)=\sum_{i+j=\nu_0}\alpha_{i,j}x_1^{\nu_0}y_1^j \textup{ and } B_{\nu_0}(x_1,x_1y_1)=\sum_{i+j=\nu_0}\beta_{i,j}x_1^{\nu_0}y_1^j.$$
Since the order of tangency of $\phi$ is $\nu_0+1$ then there exists $i$ and $j$ satisfying $i+j=\nu_0$ such that $\alpha_{i,j}\neq 0$ or $\beta_{i,j}\neq 0$. Let $j_0$ be the smallest such $j$. So, we have
$$\tilde{\phi}(x_1,y_1)=\left(x_1(1+\tilde{A}_{\nu_1}(x_1,y_1)+\ldots),y_1(1+\tilde{B}_{\nu_1}(x_1,y_1)+\ldots)\right),$$
where 
$$\tilde{A}_{\nu_1}(x_1,y_1)=\sum_{i+j=\alpha_0\leq j_0}\alpha'_{i,j}x_1^{\nu_0+i}y_1^j \textup{ and }  \tilde{B}_{\nu_1}(x_1,y_1)=\sum_{i+j=\alpha_0\leq j_0}(\beta'_{i,j}-\alpha'_{i,j})x_1^{\nu_0+i}y_1^j.$$
So, the order of tangency of $\tilde{\phi}$, $\nu_1+1$, is equal to $\nu_0+\alpha_0+1$. We define the function $\tilde{f}$ by 
$$\tilde{f}(t)=t\tilde{B}_{\nu_1}(1,t)-t\tilde{A}_{\nu_1}(1,t).$$
We know that $\nu_1\geq\nu_0\geq N$. Since the action of $\tilde{\phi}$ on any component of height $\nu_1+1$ conjugates the complete cones, then, if $\nu_1=N$, the function $\tilde{f}$ vanishes at 0, 1 and $\infty$. Since $\tilde{\phi}$ is non-dicritical then $\alpha_0+1$ must be greater than or equal to 3. This implies that $j_0\geq 2$ and so for all $j<2$ satisfying $i+j=\nu_0$, we have $\alpha_{i,j}=\beta_{i,j}=0$. However, the function $f(t)=t^3\sum_{i+j=\nu_0}(\beta_{i,j}-\alpha_{i,j})t^{j-2}$ vanishes at $\{0,\infty,a_{1,1},\ldots, a_{1,N-1}\}$. Since $\phi$ is non-dicritical, then $\nu_0-2$ must be greater than or equal to $N$. This implies that $\nu_1$ must be at least $N+4$ which is impossible. Thus, $\nu_1$ must be greater than $N$. We proceed similarly at each level. Finally, if $\nu_1=2M+N-6$, then the function $\tilde{f}$ vanishes at $\{0,1,\infty, b_{1,1}, \ldots, b_{1,M-3}\}$. Since $\tilde{\phi}$ is non-dicritical, then $\alpha_0+1$ must be at least $M$. This implies that $j_0\geq M-1$. Similarly, we must have $\nu_0-M+1\geq N$. As a consequence, $\nu_1$ must be at least $2M+N-2$ which is impossible.
\item The proof is similar to that of the first point, noting that we necessarily have $a_{k,i}=a'_{k,i}$ for all $1\leq i\leq N-1$ and $1\leq k\leq\nu_0$. 
\end{enumerate}
\end{proof}

\newpage

\Addresses

\end{document}